\numberwithin{equation}{section}
\numberwithin{figure}{section}
\theoremstyle{plain}
\newtheorem{thm}{\protect\theoremname}[section]
\theoremstyle{definition}
\newtheorem{defn}[thm]{\protect\definitionname}
\theoremstyle{definition}
\newtheorem{example}[thm]{\protect\examplename}
\theoremstyle{plain}
\newtheorem{lem}[thm]{\protect\lemmaname}
\theoremstyle{plain}
\newtheorem{cor}[thm]{\protect\corollaryname}
\theoremstyle{remark}
\newtheorem{rem}[thm]{\protect\remarkname}
\theoremstyle{plain}
\newtheorem{conjecture}[thm]{\protect\conjecturename}
\theoremstyle{plain}
\newtheorem{question}[thm]{\protect\questionname}
\theoremstyle{plain}
\newtheorem{prop}[thm]{\protect\propositionname}
\providecommand{\conjecturename}{Conjecture}
\providecommand{\corollaryname}{Corollary}
\providecommand{\definitionname}{Definition}
\providecommand{\examplename}{Example}
\providecommand{\lemmaname}{Lemma}
\providecommand{\propositionname}{Proposition}
\providecommand{\questionname}{Question}
\providecommand{\remarkname}{Remark}
\providecommand{\theoremname}{Theorem}
\begin{document}
\global\long\def\F{\mathrm{\mathbf{F}} }%
\global\long\def\Aut{\mathrm{Aut}}%
\global\long\def\C{\mathbf{C}}%
\global\long\def\H{\mathcal{H}}%
\global\long\def\U{\mathcal{U}}%
\global\long\def\P{\mathcal{P}}%
\global\long\def\ext{\mathrm{ext}}%
\global\long\def\hull{\mathrm{hull}}%
\global\long\def\triv{\mathrm{triv}}%
\global\long\def\Hom{\mathrm{Hom}}%
\global\long\def\trace{\mathrm{tr}}%
\global\long\def\End{\mathrm{End}}%
\global\long\def\Z{\mathbf{Z}}%
\global\long\def\GL{\mathrm{GL}}%
\global\long\def\T{\mathbb{T}}%
\global\long\def\df{\mathrm{def}}%
\global\long\def\Epi{\mathrm{Epi}}%
\global\long\def\R{\mathbf{R}}%
\global\long\def\FS{\mathrm{FS}}%
\global\long\def\O{\mathrm{O}}%
 
\global\long\def\Sp{\mathrm{Sp}}%
 
\global\long\def\U{\mathrm{U}}%
\global\long\def\cl{\mathrm{cl}}%
\global\long\def\scl{\mathrm{scl}}%
\global\long\def\Q{\mathbf{Q}}%
\global\long\def\N{\mathbf{N}}%
\global\long\def\Sym{\mathrm{Sym}}%
\global\long\def\std{\mathrm{std}}%

\title{Automorphism-invariant positive definite functions on free groups }
\author{Benoît Collins, Michael Magee, Doron Puder}
\begin{abstract}
In this article we raise some new questions about positive definite
functions on free groups, and explain how these are related to more
well-known questions. The article is intended as a survey of known
results that also offers some new perspectives and interesting observations;
therefore the style is expository.

\tableofcontents{}
\end{abstract}

\maketitle

\section{Introduction\label{sec:Introduction}}

Fix $r\geq1$ and write $\F=\F_{r}$ for a free group on $r$ generators
$\{x_{1},\ldots,x_{r}\}$. A central role in this paper will be played
by the automorphism group $\Aut(\F)$ of $\F$. It was proved by Nielsen
\cite{Nielsen} that $\Aut(\F)$ is generated by the following \emph{elementary
Nielsen moves:}
\begin{itemize}
\item For $\sigma$ an element of the symmetric group $S_{r}$, we have
$\alpha_{\sigma}\in\Aut(\F)$ where
\begin{equation}
\alpha_{\sigma}(x_{1},\ldots,x_{r})\stackrel{\mathrm{def}}{=}(x_{\sigma(1)},\ldots,x_{\sigma(r)}).\label{eq:nielsen-perm}
\end{equation}
\item We have $\iota\in\Aut(\F)$ where 
\begin{equation}
\iota(x_{1},x_{2},\ldots,x_{r})=(x_{1}^{-1},x_{2},\ldots,x_{r}).\label{eq:nielsen-inverse}
\end{equation}
\item We have $\gamma\in\Aut(\F)$ where
\begin{equation}
\gamma(x_{1},x_{2},\ldots,x_{r})=(x_{1}x_{2},x_{2},\ldots,x_{r}).\label{eq:nielsen-mult}
\end{equation}
\end{itemize}
The other central concept of this paper is a \emph{positive definite
function }on a group. 
\begin{defn}
\label{def:positive-definite}Let $\Gamma$ be any discrete group.
A function $\tau:\Gamma\to\C$ is called \emph{positive definite}
if for any finite subset $S\subset\Gamma$, the matrix 
\[
[\tau(\gamma'\gamma^{-1})]_{\gamma,\gamma'\in S}
\]
is positive semi-definite. In other words, for any vector $(\alpha_{\gamma})_{\gamma\in S}\in\C^{S}$
we have
\[
\sum_{\gamma,\gamma'\in S}\tau(\gamma'\gamma^{-1})\alpha_{\gamma'}\overline{\alpha_{\gamma}}\geq0.
\]
\end{defn}

If $\Gamma$ is a discrete group, the group $\Aut(\Gamma)$ acts by
precomposition on the collection of positive definite functions on
$\Gamma$, giving rise to the notion of $\Aut(\Gamma)$-invariant
positive definite functions. Explicitly, a positive definite function
$\tau$ is $\Aut(\Gamma)$-invariant if 
\[
\tau(\alpha(\gamma))=\tau(\gamma),\quad\forall\gamma\in\Gamma,\,\forall\alpha\in\Aut(\Gamma).
\]
In this paper we are mainly interested in the case $\Gamma=\F$. Positive
definite functions on free groups, without the $\Aut(\F)$-invariance
condition, have been the subject of various investigations \cite{DF,Bozejko,BT},
stemming in part from a fundamental construction of Haagerup in \cite{Haagerup}.
See also the monograph \cite{FP}.

\emph{Our aim here is to explain what is known about $\Aut(\F)$-invariant
positive definite functions on $\F$, and identify some important
questions about them.}

\begin{example}
\label{exa:reg-defn}Let $\tau_{\lambda}(e)=1$ and $\tau_{\lambda}(w)=0$
for $w\neq e$. One can directly verify that this is a positive definite
function on $\F$, and that $\tau_{\lambda}$ is $\Aut(\F)$-invariant.
\end{example}

\begin{example}
\label{exa:triv-definition}Let $\tau_{\triv}(w)=1$ for all $w\in\F$.
This is another $\Aut(\F)$-invariant positive definite function on
$\F$. 
\end{example}

A rich family of examples that are the subject of much ongoing work
arise from \emph{word maps. }Throughout the rest of this paper, $G$
will always refer to a compact topological group, and $\mu$ will
be its probability Haar measure. \emph{In this paper, all topological
groups are assumed to be Hausdorff}\footnote{It is convenient to assume this so that we can identify Borel measures
on $G$ or $G^{r}$ with elements of the continuous linear dual of
continuous functions, without getting into technicalities.}\emph{.} Denote $G^{r}\stackrel{\mathrm{def}}{=}\underbrace{G\times G\times\ldots\times G}_{r~\mathrm{times}}$.
Any $w\in\F_{r}$ gives rise to a \emph{word map}
\[
w:G^{r}\to G
\]
defined by substitutions. For example, if $r=2$ and $w=x_{1}^{2}x_{2}^{-2}$,
then $w(g_{1},g_{2})=g_{1}^{2}g_{2}^{-2}$. A related concept is that
of the $w$-measure on $G$. The $w$-measure is the law of the random
variable obtained by picking $r$ independent elements of $G$ according
to the Haar measure, and evaluating the word map $w$ at this random
tuple. More formally, the \emph{$w$-measure on $G$ }is the pushforward
measure
\[
\mu_{w}=w_{*}(\mu^{r}),
\]
where $\mu^{r}$ is the Haar measure on $G^{r}$. Word maps and measures
give rise to $\Aut(\F)$-invariant positive definite functions on
$\F$ as follows:
\begin{example}[Compact group construction]
\label{exa:compact-group-construction}Let $G$ be a compact topological
group and $\pi:G\to U(V)$ be an unitary representation of $G$, with
$V$ a finite dimensional vector space over $\C$. We define
\[
\tau_{G,\pi}:\F\to\C
\]
by
\[
\tau_{G,\pi}(w)\stackrel{\df}{=}\int_{g\in G}\trace(\pi(g))d\mu_{w}=\int_{{\bf g}\in G^{r}}\trace(\pi(w({\bf g})))d\mu^{r}({\bf g}).
\]
In other words, this function maps $w\in\F$ to the expected value
of the character of $\pi$ under the $w$-measure $\mu_{w}$. This
is a positive definite function on $w$ as follows. Suppose $S\subset\F$
and we are given $\alpha_{w}\in\C$ for each $w\in S$. Then
\begin{align}
\sum_{w,w'\in S}\tau_{G,\pi}(w'w^{-1})\alpha_{w'}\overline{\alpha_{w}} & =\sum_{w,w'\in S}\alpha_{w'}\overline{\alpha_{w}}\int_{{\bf g}\in G^{r}}\trace(\pi([w'w^{-1}]({\bf g})))d\mu^{r}({\bf g})\nonumber \\
 & =\int_{{\bf g}\in G^{r}}\trace(A_{{\bf {\bf g}}}A_{{\bf g}}^{*})d\mu^{r}({\bf g}),\label{eq:positivity}
\end{align}
where a superscript $*$ means conjugate transpose and 
\[
A_{{\bf g}}=\sum_{w\in S}\alpha_{w}\pi(w({\bf g}))\in\End(V).
\]
Hence the quantity (\ref{eq:positivity}) is an integral of traces
of non-negative operators and hence must be non-negative.

Moreover, $\tau_{G,\pi}$ is $\Aut(\F)$-invariant. This will follow
from the following lemma that is folklore\footnote{See for example \cite{GOLDMAN} where a version is stated without
a proof in the second sentence, and the unpublished paper \cite[Section 2.5]{MP}.}. In this paper, all proofs are given in the Appendix, and we mark
all statements with proofs in the Appendix by a $\star$.
\end{example}

\begin{lem}
\label{lem:Haar-measure-Aut-invariant}$\star$ The action of $\Aut(\F)$
by precomposition on $\Hom(\F,G)\cong G^{r}$ preserves the Haar measure
$\mu^{r}$.
\end{lem}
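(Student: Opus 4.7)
The plan is to use Nielsen's theorem, cited in the introduction, to reduce the problem to checking invariance under the three generating types of automorphisms: permutations $\alpha_{\sigma}$, the inversion $\iota$, and the multiplication $\gamma$. Under the identification $\Hom(\F,G) \cong G^{r}$ sending $\phi \mapsto (\phi(x_{1}), \ldots, \phi(x_{r}))$, precomposition by $\alpha \in \Aut(\F)$ corresponds to the map $(g_{1}, \ldots, g_{r}) \mapsto (\alpha(x_{1})(\mathbf{g}), \ldots, \alpha(x_{r})(\mathbf{g}))$ on $G^{r}$, where each coordinate is evaluated as a word in the $g_{i}$. It therefore suffices to verify that each Nielsen generator, acting on $G^{r}$ in this way, pushes $\mu^{r}$ forward to itself. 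Since the collection of $\Aut(\F)$-elements preserving $\mu^{r}$ is a subgroup of $\Aut(\F)$, the three generators are enough.

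For the permutation $\alpha_{\sigma}$, the induced map on $G^{r}$ is $(g_{1}, \ldots, g_{r}) \mapsto (g_{\sigma(1)}, \ldots, g_{\sigma(r)})$, which is a coordinate permutation; invariance of the product measure $\mu^{r}$ under such permutations is immediate from the definition of the product measure. For the inversion $\iota$, the induced map is $(g_{1}, g_{2}, \ldots, g_{r}) \mapsto (g_{1}^{-1}, g_{2}, \ldots, g_{r})$, and one checks invariance factorwise: the $\mu$-invariance under $g \mapsto g^{-1}$ follows from the fact that $G$ is compact, hence unimodular, so that the pushforward of Haar measure under inversion is again Haar measure (uniqueness up to scaling, combined with the fact that inversion preserves total mass, forces equality).

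The main point requiring slightly more care is the multiplication move $\gamma$, which induces the map $T: (g_{1}, g_{2}, \ldots, g_{r}) \mapsto (g_{1} g_{2}, g_{2}, \ldots, g_{r})$ on $G^{r}$. Here I would apply Fubini: for any continuous $f: G^{r} \to \C$,
\[
\int_{G^{r}} f(T(\mathbf{g})) \, d\mu^{r}(\mathbf{g}) = \int_{G^{r-1}} \!\! \left( \int_{G} f(g_{1} g_{2}, g_{2}, \ldots, g_{r}) \, d\mu(g_{1}) \right) d\mu^{r-1}(g_{2}, \ldots, g_{r}),
\]
and the inner integral equals $\int_{G} f(g_{1}, g_{2}, \ldots, g_{r}) \, d\mu(g_{1})$ by right-invariance of $\mu$ (applied with $g_{2}$ fixed). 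Reassembling via Fubini yields $\int_{G^{r}} f \, d\mu^{r}$, so $T_{*} \mu^{r} = \mu^{r}$. The only mild obstacle is the technicality of justifying Fubini and the measurability of the relevant functions, which is standard given the Hausdorff hypothesis on $G$ stated in the paper. Combining the three cases and invoking Nielsen's theorem completes the proof.
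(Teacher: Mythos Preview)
Your proof is correct and follows essentially the same approach as the paper: reduce to the Nielsen generators and verify invariance of $\mu^{r}$ under each of $\alpha_{\sigma}$, $\iota$, and $\gamma$ using, respectively, symmetry of the product measure, inversion-invariance of Haar measure on a compact group, and right-invariance of Haar measure via Fubini.
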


The following two corollaries are immediate consequences of Lemma
\ref{lem:Haar-measure-Aut-invariant}.
\begin{cor}
\label{cor:w-measure-aut-invariant}If $G$ is a compact topological
group, $w\in\F$, and $\alpha\in\Aut(\F)$, then the $w$-measure
$\mu_{w}$ on $G$ is equal to the $\alpha(w)$-measure $\mu_{\alpha(w)}$
on $G$, namely,
\[
\mu_{w}=\mu_{\alpha(w)}.
\]
\end{cor}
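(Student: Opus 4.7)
The plan is to reduce the statement directly to Lemma \ref{lem:Haar-measure-Aut-invariant} via the standard identification $\Hom(\F,G) \cong G^r$. The key observation I would make first is that if we identify a tuple $\mathbf{g} = (g_1, \ldots, g_r) \in G^r$ with the unique homomorphism $\phi_\mathbf{g} \colon \F \to G$ sending $x_i$ to $g_i$, then the word map is nothing but evaluation: $w(\mathbf{g}) = \phi_\mathbf{g}(w)$. Under this identification, the action of $\alpha \in \Aut(\F)$ by precomposition on $\Hom(\F,G)$ becomes a map $\alpha_* \colon G^r \to G^r$ characterized by $\phi_{\alpha_*(\mathbf{g})} = \phi_\mathbf{g} \circ \alpha$.

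The second step is the functorial identity
\[
\alpha(w)(\mathbf{g}) \;=\; \phi_\mathbf{g}(\alpha(w)) \;=\; (\phi_\mathbf{g} \circ \alpha)(w) \;=\; \phi_{\alpha_*(\mathbf{g})}(w) \;=\; w(\alpha_*(\mathbf{g})),
\]
so that, as maps $G^r \to G$, we have $\alpha(w) = w \circ \alpha_*$. Taking pushforwards of $\mu^r$ then gives
\[
\mu_{\alpha(w)} \;=\; (\alpha(w))_* \mu^r \;=\; w_* \bigl( (\alpha_*)_* \mu^r \bigr).
\]

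At this point the conclusion is immediate: Lemma \ref{lem:Haar-measure-Aut-invariant} tells us exactly that $(\alpha_*)_* \mu^r = \mu^r$, so the right-hand side equals $w_* \mu^r = \mu_w$. I do not anticipate any real obstacle here; essentially the entire content of the corollary is packaged in the lemma, and what remains is only the bookkeeping that translates ``precomposition on $\Hom(\F,G)$'' into ``the measure-preserving change of variables $\alpha_*$ on $G^r$'' and verifies that $w \circ \alpha_* = \alpha(w)$ as word maps. The only point worth stating carefully is the direction of composition in that last identity, since one could easily conflate $\alpha(w)$ with $w \circ \alpha^{-1}_*$; tracking the definitions as above shows the composition is as claimed, and either way the $\Aut(\F)$-invariance of $\mu^r$ makes the distinction harmless.
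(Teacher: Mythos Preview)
Your proposal is correct and is exactly the approach the paper takes: the corollary is stated there as an immediate consequence of Lemma~\ref{lem:Haar-measure-Aut-invariant}, and what you have written simply spells out the identification $\alpha(w) = w \circ \alpha_*$ on $G^r$ that makes this immediacy transparent.
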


\begin{cor}
If $G$ is a compact topological group and $\pi$ is a finite dimensional
unitary representation of $G$, the positive definite function $\tau_{G,\pi}$
on $\F$ given in Example \ref{exa:compact-group-construction} is
$\Aut(\F)$-invariant.
\end{cor}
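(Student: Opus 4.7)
The plan is to deduce the corollary as a one-line consequence of Corollary \ref{cor:w-measure-aut-invariant}, since all the substantive content has already been packaged into Lemma \ref{lem:Haar-measure-Aut-invariant} and its corollary about word measures.

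First, I would rewrite $\tau_{G,\pi}(w)$ purely in terms of the $w$-measure $\mu_w$ on $G$: by the definition in Example \ref{exa:compact-group-construction} and a standard change of variables for pushforward measures,
\[
\tau_{G,\pi}(w) \;=\; \int_{G^r} \trace(\pi(w(\mathbf{g})))\,d\mu^r(\mathbf{g}) \;=\; \int_G \trace(\pi(g))\,d\mu_w(g).
\]
This is the crucial reformulation: the only dependence on $w$ on the right-hand side is through the measure $\mu_w$, so any property that equates $w$-measures for different $w$'s automatically gives a coincidence of $\tau_{G,\pi}$ values.

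Second, I would apply Corollary \ref{cor:w-measure-aut-invariant}, which tells us that $\mu_{\alpha(w)} = \mu_w$ for every $\alpha \in \Aut(\F)$. Plugging this into the previous display yields
\[
\tau_{G,\pi}(\alpha(w)) \;=\; \int_G \trace(\pi(g))\,d\mu_{\alpha(w)}(g) \;=\; \int_G \trace(\pi(g))\,d\mu_w(g) \;=\; \tau_{G,\pi}(w),
\]
for every $w \in \F$ and $\alpha \in \Aut(\F)$, which is exactly the $\Aut(\F)$-invariance claim.

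There is essentially no obstacle once one has Corollary \ref{cor:w-measure-aut-invariant} in hand; the only thing worth checking carefully is the pushforward identity $\int_{G^r} f(w(\mathbf{g}))\,d\mu^r = \int_G f\,d\mu_w$ for the continuous function $f = \trace \circ \pi$, which is just the definition of $\mu_w = w_*(\mu^r)$. The real work sits upstream in Lemma \ref{lem:Haar-measure-Aut-invariant}, where one uses that $\Aut(\F)$ is generated by the Nielsen moves $\alpha_\sigma$, $\iota$, $\gamma$ and checks invariance of $\mu^r$ on $G^r \cong \Hom(\F,G)$ against each of these generators (permutation of coordinates, inversion of a single coordinate, and the shear $(g_1,g_2,\ldots,g_r) \mapsto (g_1 g_2, g_2, \ldots, g_r)$), each of which preserves $\mu^r$ by left/right invariance and inversion-invariance of the Haar measure on $G$.
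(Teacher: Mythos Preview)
Your proof is correct and matches the paper's approach: the paper simply declares this corollary (along with Corollary~\ref{cor:w-measure-aut-invariant}) to be an immediate consequence of Lemma~\ref{lem:Haar-measure-Aut-invariant}, without writing out the details. Your argument via the pushforward identity and Corollary~\ref{cor:w-measure-aut-invariant} is exactly the intended unpacking of ``immediate.''
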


Note that this family of positive definite functions on $\F$ coming
from compact groups, includes, in particular, those coming from \emph{finite}
groups.

\begin{rem}
We point out that the construction given in Example \ref{exa:compact-group-construction}
also works if $\mu^{r}$ is replaced by any $\Aut(\F)$-invariant
Borel measure on $G^{r}$. These measures are by no means classified,
and we will return to this point later in Question \ref{que:classification-of-invariant-borel-measures}. 
\end{rem}

In light of Corollary \ref{cor:w-measure-aut-invariant}, the following
conjectures have been put forward:
\begin{conjecture}
\label{que:compact-groups-separate-orbits}Suppose $w_{1},w_{2}$
are in $\F$. If the $w$-measures $\mu_{w_{1}}$ and $\mu_{w_{2}}$
are the same on all compact groups $G$, does it follow that $w_{2}\in\Aut(\F).w_{1}$?
\end{conjecture}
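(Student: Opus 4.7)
Since Conjecture~\ref{que:compact-groups-separate-orbits} is an open problem, what follows is a strategy I would attempt rather than a completed proof. The overall plan is to reduce the statement to symmetric groups and then extract enough combinatorial $\Aut(\F)$-invariants from word measures to reconstruct the $\Aut(\F)$-orbit of a word from its measures.

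The first step would be a reduction from arbitrary compact groups to finite, and ideally symmetric, groups. The rationale is Peter--Weyl: on any compact $G$, the measure $\mu_{w}$ is determined by its moments $\tau_{G,\pi}(w)$ against characters of finite-dimensional unitary representations, and since $\F$ is residually finite one may hope that those factoring through finite quotients already suffice. Specializing to $G=S_{n}$ with the standard permutation representation, $\tau_{S_{n},\std}(w)$ records the expected number of fixed points of $w(\pi_{1},\ldots,\pi_{r})$, and by varying to higher tensor powers one recovers $\mathbb{E}[\mathrm{fix}(w^{k})]$ for all $k$, hence the distribution of the conjugacy class of $w(\pi_{1},\ldots,\pi_{r})$.

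The second step is to exploit the theory of word measures on $S_{n}$ developed by Puder and Parzanchevski. The expectations above admit a rational expansion in $1/n$ whose coefficients enumerate \emph{algebraic extensions} of $\langle w\rangle\leq\F$ and are manifestly $\Aut(\F)$-invariant; the leading term encodes the primitivity rank of $w$. The optimistic plan is that the full package of such invariants --- a weighted list of Stallings core graphs of algebraic extensions containing $w$ --- determines $w$ up to $\Aut(\F)$. Once a canonical combinatorial representative of the orbit is extracted, Whitehead's algorithm decides $\Aut(\F)$-equivalence, and one concludes $w_{2}\in\Aut(\F).w_{1}$.

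The main obstacle, and the reason the conjecture remains open, is precisely this second step: no currently known family of invariants has been shown to be complete. Even combining the symmetric-group expansions with their counterparts on $\U(n)$, $\O(n)$, and $\Sp(n)$ studied by Magee, Puder and collaborators, it is a priori possible that two distinct $\Aut(\F)$-orbits produce identical word measures on every compact group. I would expect any serious attempt to stall here, and plausible progress would likely require either a reconstruction theorem for $\Aut(\F)$-orbits from Stallings-graph data of algebraic extensions, or the identification of a genuinely new $\Aut(\F)$-invariant arising from a compact-group construction not yet considered in the literature.
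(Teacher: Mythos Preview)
You correctly identify that Conjecture~\ref{que:compact-groups-separate-orbits} is open; the paper offers no proof. Your proposed strategy---extracting $\Aut(\F)$-invariants from word measures on specific families of groups (especially $S_n$, $\U(n)$, $\O(n)$, $\Sp(n)$) and hoping these suffice to separate orbits---is exactly one of the two approaches the paper itself surveys; see the paragraph beginning ``One plausible strategy'' in Section~2. You also correctly diagnose the obstacle: no known list of such invariants has been shown to be complete.

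Where the paper adds something is in proposing a second, complementary route. Rather than working group by group, one studies the compact convex set $\P_{1}(\F)^{\Aut(\F)}$ of all $\Aut(\F)$-invariant positive definite functions and its extreme points. The program in Section~\ref{sec:Further-Open-Questions} is: (I) show that $\ext[\P_{1}(\F)^{\Aut(\F)}]$ separates $\Aut(\F)$-orbits, and (II) show these extremal functions are weak-$*$ approximable by the compact-group functions $\tilde{\tau}_{G,\pi}$. This reframing trades the search for ad hoc invariants for structural questions about a Choquet boundary, and links the conjecture to ergodicity of the $\Aut(\F)$-action on $G^{r}$ via Theorems~\ref{thm:extreme-functions-and-ergodic-actions} and~\ref{thm:gelander}. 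Your approach and the paper's are not in conflict---the paper presents them as parallel---but the positive-definite-function viewpoint is the perspective the paper is principally advocating, and it does not appear in your outline.
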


It has even been conjectured that:
\begin{conjecture}[Shalev]
\label{que:finite-groups-separate-orbits}If the $w$-measures $\mu_{w_{1}}$
and $\mu_{w_{2}}$ are the same on all \uline{finite} groups $G$,
then $w_{2}\in\Aut(\F).w_{1}$.
\end{conjecture}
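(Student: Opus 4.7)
The plan is to show that the finite-group fingerprint $w\mapsto(\mu_{w})_{G\text{ finite}}$ is injective on $\Aut(\F)$-orbits, by extracting a complete set of orbit-invariants from this data; the converse direction is Corollary \ref{cor:w-measure-aut-invariant} (applied to finite $G$), so only reconstruction is at issue.

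First, I would read off a subgroup-theoretic fingerprint from the symmetric-group measures. By the classical coset-action formula, $\mathbb{E}_{S_{n}}[\#\mathrm{Fix}(w(\mathbf{g}))]$ equals a weighted count of index-$n$ subgroups $H\leq\F$ with $w\in H$; varying $n$ and applying Möbius inversion over the lattice of subgroups containing $w$ recovers, for each rank $k$ and index $n$, the number of subgroups $H\cong\F_{k}$ of index $n$ with $w\in H$. This is manifestly $\Aut(\F)$-invariant and, following Puder's work, contains the primitivity rank $\pi(w)$ and the collection of critical subgroups of $w$. Next I would refine using non-symmetric finite groups: finite abelian $G$ pin down the $\GL_{r}(\Z)$-orbit of the abelianization of $w$, while $G=\GL_{n}(\F_{q})$ and finite simple groups of Lie type yield expected character values $\mathbb{E}_{G}[\chi(w(\mathbf{g}))]$ which, by Frobenius-type formulas, are essentially point counts of representation varieties $\Hom(\F/\langle\!\langle w\rangle\!\rangle,G)$, producing finer invariants of the one-relator group $\F/\langle\!\langle w\rangle\!\rangle$.

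Finally, I would attempt to parlay the extracted invariants into a full orbit classification by induction on primitivity rank: when $w$ is primitive, $\Aut(\F)$ acts transitively and the conclusion is immediate; for finite $\pi(w)$ one would use the recovered critical-subgroup data as an anchor for a Whitehead-style peak-reduction argument driven by finite-group statistics rather than by direct manipulation of cyclic words. The main obstacle, and the reason this remains a conjecture, lies precisely here: no known mechanism extracts the Whitehead graph or Stallings core of $w$ from finite-group data, and the classification of $\Aut(\F)$-orbits on $\F$ does not reduce to the numerical invariants produced above. A successful proof will likely require either a Tannakian-style reconstruction theorem for profinite completions of one-relator groups, or the identification of a family of finite groups $G_{w}$ — depending on $w$ — whose word measures directly witness elementary Nielsen moves; neither is currently available, and it is conceivable that the conjecture is false and that the sought-after refinement of the fingerprint must instead come from infinite compact groups as in Conjecture \ref{que:compact-groups-separate-orbits}.
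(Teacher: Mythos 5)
This statement is an open conjecture: the paper attributes it to Shalev \cite[Conj.~4.2]{Shalev2013} (posed as a question in \cite[Question 2.2]{AV}) and offers no proof, so there is nothing in the paper to compare your argument against. Your proposal is, accordingly, not a proof, and to your credit you say so explicitly. The first two stages of your sketch do describe genuine, known partial progress: the expected fixed-point counts $\mathbb{E}_{S_n}[\#\mathrm{Fix}(w(\mathbf{g}))]$ really do encode counts of finite-index subgroups containing $w$ and underlie the primitivity-rank analysis of \cite{Puder2014,PP15}, finite abelian groups do recover the $\GL_r(\Z)$-orbit of the abelianization (this is essentially Theorem \ref{thm:hierarchy-collapse} of the paper), and the power/commutator detection results of Lubotzky and Khelif cited in Section 2 fit your ``fingerprint'' framework. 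These suffice for the special cases recorded in the paper (Theorems \ref{conj:measure-preserving-words-primitive} and \ref{thm:commutator-and-primitive-powers-separated-finite}), but not for general $w$.

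The concrete gap is your third stage. You need the extracted numerical invariants to separate arbitrary $\Aut(\F)$-orbits, and no such separation theorem exists: as the paper's own example of $w=x_1^2x_2^3$ illustrates, natural $\Aut(\F)$-invariant algebraic data (such as the characteristic subgroup generated by the orbit) can coincide for distinct orbits, and there is no known mechanism for running a Whitehead peak-reduction argument off word-measure statistics alone. An ``induction on primitivity rank'' has no established inductive step beyond the base cases already covered by \cite{Puder2014,PP15,Hanany}. Your closing paragraph correctly diagnoses all of this, so the proposal should be read as a survey of the state of the art plus a research program, not as a proof; as a purported proof of the conjecture it fails at exactly the point you identify.
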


See \cite[Question 2.2]{AV} where Conjecture \ref{que:finite-groups-separate-orbits}
was posed as a question; the conjecture was made by Shalev in \cite[Conj. 4.2]{Shalev2013}.
Of course Conjecture \ref{que:compact-groups-separate-orbits} is
a direct consequence of Conjecture \ref{que:finite-groups-separate-orbits}.
In this paper we introduce the following related (weaker) question:
\begin{question}
\label{que:Do--invariant-positive-functions-sep-orbits}Do $\Aut(\F)$-invariant
positive definite functions on $\F$ separate $\Aut(\F)$-orbits?
In other words, if $w_{1},w_{2}$ are in $\F$ and $\tau(w_{1})=\tau(w_{2})$
for all $\Aut(\F)$-invariant positive definite functions $\tau$
on $\F$, does it follow that $w_{2}\in\Aut(\F).w_{1}$?
\end{question}

An affirmative answer to Question \ref{que:Do--invariant-positive-functions-sep-orbits}
could be viewed as an orbital analog of the Gelfand-Raikov Theorem
\cite{GR}: for any locally compact topological group $\mathcal{G}$,
the positive definite functions on $\mathcal{G}$ separate elements
of $\mathcal{G}$. Indeed, Question \ref{que:Do--invariant-positive-functions-sep-orbits}
could be asked for any locally compact topological group, but we restrict
our attention here to the important special case of free groups. 

To compare Question \ref{que:Do--invariant-positive-functions-sep-orbits}
and Conjectures \ref{que:compact-groups-separate-orbits} and \ref{que:finite-groups-separate-orbits},
we introduce some equivalence relations on $\F$. For $w_{1},w_{2}\in\F$
we say
\begin{itemize}
\item $w_{1}\stackrel{\Aut(\F)}{\sim}w_{2}$ if $w_{2}\in\Aut(\F).w_{1}$
\item $w_{1}\stackrel{\text{\textbf{FinGrp}}}{\sim}w_{2}$ if $\mu_{w_{1}}=\mu_{w_{2}}$
on any finite group.
\item $w_{1}\stackrel{\mathrm{\textbf{CptGrp}}}{\sim}w_{2}$ if the measures
$\mu_{w_{1}}=\mu_{w_{2}}$ on any compact group.
\item $w_{1}\stackrel{\textbf{PosDef}}{\sim}w_{2}$ if $\tau(w_{1})=\tau(w_{2})$
for all $\Aut(\F)$-invariant positive definite functions $\tau$
on $\F$.
\end{itemize}
For $w_{1},w_{2}\in\F$, we have 
\begin{equation}
w_{1}\stackrel{\Aut(\F)}{\sim}w_{2}\stackrel{\mathrm{}}{\implies}w_{1}\stackrel{\textbf{PosDef}}{\sim}w_{2}\stackrel{\mathrm{}}{\implies}w_{1}\stackrel{\mathrm{\textbf{CptGrp}}}{\sim}w_{2}\stackrel{\mathrm{}}{\implies}w_{1}\stackrel{\text{\textbf{FinGrp}}}{\sim}w_{2}.\label{eq:relations-impliications}
\end{equation}
The first and last implications above are obvious. The second implication
follows immediately from the following lemma.
\begin{lem}
\label{lem:W-MEASURE-determined-by-character-integrals}$\star$ For
any compact topological group $G$, and $w\in\F$, the $w$-measure
$\mu_{w}$ on $G$ is determined uniquely by the map $\pi\mapsto$$\tau_{G,\pi}(w)$
where $\pi$ runs over irreducible unitary representations of $G$
and $\tau_{G,\pi}$ are the functions constructed in Example \ref{exa:compact-group-construction}.
\end{lem}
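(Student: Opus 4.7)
The plan is to reduce the statement to the Peter--Weyl theorem, after first observing that $\mu_{w}$ is a central (conjugation-invariant) measure on $G$. I would proceed as follows.

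First, I would show that $\mu_{w}$ is conjugation-invariant on $G$. For any $h\in G$ and $(g_{1},\ldots,g_{r})\in G^{r}$, the identity
\[
w(hg_{1}h^{-1},\ldots,hg_{r}h^{-1}) = h\,w(g_{1},\ldots,g_{r})\,h^{-1}
\]
holds because $w$ is a word. Since the Haar measure $\mu^{r}$ on $G^{r}$ is invariant under the diagonal conjugation action (each factor being conjugation-invariant), pushing forward under $w$ gives that $\mu_{w}$ is invariant under $g\mapsto hgh^{-1}$ for every $h\in G$.

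Second, I would invoke the Peter--Weyl theorem. The integrals $\tau_{G,\pi}(w)=\int_{G}\chi_{\pi}(g)\,d\mu_{w}(g)$ are the pairings of $\mu_{w}$ against the irreducible characters $\chi_{\pi}$. Peter--Weyl implies that the matrix coefficients of irreducible unitary representations are dense in $C(G)$ (in the uniform topology), and averaging over the conjugation action sends matrix coefficients of $\pi$ into $\C\cdot\chi_{\pi}$. Consequently the linear span of $\{\chi_{\pi}:\pi\in\widehat{G}\}$ is uniformly dense in the space $C(G)^{G}$ of continuous class functions on $G$.

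Third, I would combine the two observations. Because $\mu_{w}$ is central, for any $f\in C(G)$ one has $\int f\,d\mu_{w}=\int \widetilde{f}\,d\mu_{w}$ where $\widetilde{f}(g)=\int_{G}f(hgh^{-1})\,d\mu(h)\in C(G)^{G}$. Thus $\mu_{w}$ is completely determined as a continuous linear functional on $C(G)$ by its restriction to $C(G)^{G}$. By density of characters in $C(G)^{G}$ and the Riesz representation theorem (identifying finite Borel measures with continuous linear functionals on $C(G)$), the numbers $\tau_{G,\pi}(w)$ as $\pi$ ranges over $\widehat{G}$ determine $\mu_{w}$ uniquely.

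The only real subtlety is making sure the density/Riesz step is valid for a general, not necessarily metrizable, compact Hausdorff group $G$; this is where I would need to be careful to use the standing Hausdorff assumption from the introduction and the correct form of Peter--Weyl for compact groups (where $\widehat{G}$ may be uncountable). Otherwise the argument is essentially Fourier analysis on compact groups applied to central measures.
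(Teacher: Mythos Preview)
Your argument is correct and is essentially the same as the paper's: both reduce to Peter--Weyl plus Riesz, together with a conjugation-averaging step that collapses matrix coefficients to characters. The only cosmetic difference is that you first package the averaging as ``$\mu_{w}$ is central, hence determined by its values on class functions,'' whereas the paper leaves $\mu_{w}$ implicit and instead computes directly that $\int_{G^{r}}\langle\pi(w(\mathbf{g}))v_{1},v_{2}\rangle\,d\mu^{r}=\frac{\langle v_{1},v_{2}\rangle}{\dim V}\tau_{G,\pi}(w)$ by inserting an extra Haar integral over $h\in G$ and applying the Schur averaging identity $\int_{G}\pi(h)A\pi(h)^{-1}\,d\mu(h)=\frac{\trace(A)}{\dim V}\mathrm{Id}$; the two arguments unwind to the same computation.
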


\begin{rem}
Section 8 in \cite{PP15} discusses a few other related equivalence
relations between words, where the focus is on word measures on finite
groups and the profinite topology on the free group.
\end{rem}

\addtocontents{toc}{\protect\setcounter{tocdepth}{1}}

\subsection*{Notation}

We write $e$ for the identity element of a group. If $A$ and $B$
are elements of the same group, then $[A,B]=ABA^{-1}B^{-1}$ is their
commutator. If $H$ is a group, then $[H,H]$ denotes its commutator
subgroup. We write $\emptyset$ for the empty set.

\subsection*{Acknowledgments}

BC was supported by JSPS KAKENHI 17K18734 and 17H04823. DP was supported
by the Israel Science Foundation (grant No. 1071/16).

\addtocontents{toc}{\protect\setcounter{tocdepth}{2}}

\section{A survey}

In this section we give a brief survey describing current knowledge
about word measures on groups and Conjectures \ref{que:compact-groups-separate-orbits}
and \ref{que:finite-groups-separate-orbits}.

It has been known for a while that several properties of free words
can be detected in finite quotients of free words and therefore also
in word measures on finite groups. For example, if a word $w\in\F$
is \emph{not} an $n$-th power (namely, if there is no $u\in\F$ with
$w=u^{n}$) then there is a normal subgroup $N\trianglelefteq\F$
such that $wN$ is not an $n$-th power in $Q=\F/N$ -- this result
is attributed to Lubotzky in \cite{thompson1997power}. It follows
that if $w_{1}$ is an $n$-th power and $w_{2}$ is not, then for
some finite group $Q$, there is an element $q\in Q$ which is not
an $n$-th power, such that $q\in w_{2}\left(Q^{r}\right)$ but $q\notin w_{1}\left(Q^{r}\right)$.
Thus $w_{1}\stackrel{\text{\textbf{FinGrp}}}{\not\sim}w_{2}$. Consult
\cite{Hanany} for a different argument yielding this last result.

Similarly, Khelif \cite{khelif2004finite} shows that if $w\in\F$
is \emph{not} a commutator of two words, then its image in some finite
quotient of $\F$ is a non-commutator. It follows that if $w_{1}$
is a commutator and $w_{2}$ not, then $w_{1}\stackrel{\text{\textbf{FinGrp}}}{\not\sim}w_{2}$.

However, the first significant progress on Conjecture \ref{que:finite-groups-separate-orbits}
came from an important special case. If $w\in\Aut(\F).x_{1}$, then
$w$ is called \emph{primitive. }A word is primitive in $\F_{r}$
if and only if it is a member of a generating set of $\F_{r}$ of
size $r$. Since it is clear that $\mu_{x_{1}}=\mu$, i.e.~Haar measure
on $G$, for any compact $G$, it follows from Corollary \ref{cor:w-measure-aut-invariant}
that if $w$ is a primitive word then $\mu_{w}=\mu$ on any compact
$G$, and in particular, on any finite $G$. The following theorem,
asserting the converse, was conjectured to hold independently by several
people including Avni, Gelander, Larsen, Lubotzky, and Shalev:
\begin{thm}[Puder-Parzanchevski]
\label{conj:measure-preserving-words-primitive}If $w\in\F_{r}$
and $\mu_{w}=\mu$ on every finite group, then $w$ is primitive.
\end{thm}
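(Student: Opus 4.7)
The plan is to detect non-primitivity by examining word measures on the family of symmetric groups $\{S_{n}\}_{n\geq 1}$, and in particular the expected number of fixed points of the random permutation $w(\sigma_{1},\ldots,\sigma_{r})$ where $\sigma_{1},\ldots,\sigma_{r}$ are independent uniform elements of $S_{n}$. For a Haar-distributed element of $S_{n}$ the expected number of fixed points is exactly $1$, independently of $n$. Hence the hypothesis $\mu_{w}=\mu$ on every finite group forces
$$\mathbb{E}_{S_{n}^{r}}\bigl[\#\mathrm{Fix}(w(\sigma_{1},\ldots,\sigma_{r}))\bigr]=1\qquad\text{for every }n\geq 1.$$

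The core of the argument is an asymptotic expansion of this expectation as $n\to\infty$. Define the \emph{primitivity rank}
$$\pi(w)=\min\{\mathrm{rank}(H):w\in H\leq\F,\ w\text{ is not primitive in }H\},$$
with the convention $\pi(w)=\infty$ precisely when $w$ is primitive in $\F$. The key technical result I would aim to establish is an expansion of the form
$$\mathbb{E}_{S_{n}^{r}}\bigl[\#\mathrm{Fix}(w(\sigma))\bigr]=1+\frac{c(w)}{n^{\pi(w)-1}}+O\bigl(n^{-\pi(w)}\bigr),$$
where $c(w)$ is a positive integer counting the subgroups attaining the minimum in the definition of $\pi(w)$. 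Granting this expansion, the theorem follows immediately: if $w$ were non-primitive, then $\pi(w)<\infty$ and $c(w)>0$, so for sufficiently large $n$ the expected number of fixed points would strictly exceed $1$, contradicting the hypothesis; hence $\pi(w)=\infty$, i.e., $w$ is primitive.

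To derive the expansion I would use the combinatorial framework of Stallings core graphs. The quantity $\#\mathrm{Fix}(w(\sigma))$ equals the number of pointed $\F$-actions on $\{1,\ldots,n\}$ fixing a chosen basepoint up to $w$, and these can be organized by the smallest subgroup $H\leq\F$ containing $w$ through which the action, restricted to $\langle w\rangle$, "algebraically factors" in the sense of Stallings foldings. A Möbius-inversion on this poset of algebraic extensions of $\langle w\rangle$ yields an exact formula in which each subgroup $H$ with $w\in H$ contributes a term of order $n^{1-\mathrm{rank}(H)}$. The leading correction to $1$ must then come from subgroups of minimal rank realizing $\pi(w)$.

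The main obstacle is to carry out the three delicate pieces of the expansion. First, one must set up the Möbius/inclusion-exclusion bijection expressing $\#\mathrm{Fix}(w(\sigma))$ as a signed sum indexed by algebraic extensions of $\langle w\rangle$. Second, one must show that subgroups $H$ with $\mathrm{rank}(H)<\pi(w)$ (in which $w$ is, by definition, primitive) contribute only to the constant term $1$; this uses the fact that when $w$ is primitive in $H$, $\langle w\rangle$ is a free factor of $H$, making the counting of lifts into $S_{n}$ identical to that for $\langle w\rangle$ alone. Third, one must verify that the contributions at rank $\pi(w)$ do not accidentally cancel, which requires a finer analysis of the Stallings graphs of the minimizers and gives the strict positivity $c(w)>0$. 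Once these three pieces are in place, the asymptotic expansion, and therefore the theorem, drops out.
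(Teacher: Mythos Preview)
The paper is a survey and does not itself prove this theorem; it only cites \cite{Puder2014} and \cite{PP15} and remarks that the proofs proceed via the functions $\tau_{S_{n},\mathrm{std}}$, i.e., the expected number of fixed points of $w$ under the word measure on $S_{n}$. Your proposal correctly reconstructs the Puder--Parzanchevski strategy---the primitivity rank $\pi(w)$, the asymptotic expansion of the fixed-point count with strictly positive leading coefficient counting critical subgroups, and the Stallings-graph/M\"obius-inversion over the poset of algebraic extensions of $\langle w\rangle$---so it matches the approach the paper points to.
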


Theorem \ref{conj:measure-preserving-words-primitive} was first proved
by Puder \cite[Theorem 1.5]{Puder2014} when $r=2$ and then proved
for general $r\geq3$ by Puder and Parzanchevski in \cite[Theorem 1.4']{PP15}.
Both papers rely on a careful analysis of the functions $\tau_{G,\pi}$
constructed in Example \ref{exa:compact-group-construction} when
$G=S_{n}$, the symmetric group on $n$ letters, and $\pi$ is the
standard $n$-dimensional representation of $S_{n}$ by $0$-$1$
matrices. Theorem \ref{conj:measure-preserving-words-primitive} can
be restated as the implication
\[
w\stackrel{\text{\textbf{FinGrp}}}{\sim}x_{1}\implies w\stackrel{\Aut(\F)}{\sim}x_{1}
\]
and therefore establishes a basic instance of Conjecture \ref{que:finite-groups-separate-orbits}.

The word $x_{1}$, and by extension, the primitive words, have the
property that whenever $\pi$ is an \emph{irreducible} representation
of the compact group $G$, the values $\tau_{G,\pi}(x_{1})$ of Example
\ref{exa:compact-group-construction} are given by a very simple formula.
Indeed, suppose that $\pi$ is an irreducible unitary representation.
Then
\begin{equation}
\tau_{G,\pi}(x_{1})=\begin{cases}
1 & \text{if \ensuremath{\pi} is the trivial representation}\\
0 & \text{otherwise.}
\end{cases}\label{eq:schur-orth}
\end{equation}
This is a direct consequence of Schur orthogonality. 

There is another type of words with a similarly general exact expression
for $\tau_{G,\pi}(w)$, namely, \emph{surface words. }An \emph{orientable
surface word} is one of the form
\[
s_{g}=[x_{1},x_{2}]\cdots[x_{2g-1},x_{2g}]
\]
where we assume $g\geq1$ and $r\geq2g$ (recall that $r$ is the
rank of the free group $\F=\F_{r}$). A \emph{non-orientable surface
word} is one of the form
\[
t_{g}=x_{1}^{2}\cdots x_{g}^{2}
\]
where $g\geq1$ and $r\geq g$. The reason for this naming is that
the one-relator groups
\[
\Gamma_{g}=\left\langle \F_{2g}\,|\,s_{g}\right\rangle ,\quad\Lambda_{g}=\left\langle \F_{g}\,|\,t_{g}\right\rangle 
\]
are respectively, the fundamental groups of a closed \uline{orientable}
surface of genus $g$, and a closed \uline{non-orientable} surface
of genus $g$ (the connected sum of $g$ copies of the real projective
plane $\mathbb{P}^{2}(\R)$).

Frobenius \cite{frobenius1896gruppencharaktere} proved the following
result for finite groups, but the same proof applies to compact groups
in general.
\begin{thm}
\label{thm:frob}Suppose that $\pi$ is an irreducible representation
of the compact group $G$ on the vector space $V$. Then
\[
\tau_{G,\pi}\left([x_{1},x_{2}]\right)=\frac{1}{\dim V}.
\]
\end{thm}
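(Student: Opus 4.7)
The plan is to compute the integral $\tau_{G,\pi}([x_{1},x_{2}])=\int_{G^{2}}\trace\bigl(\pi(g_{1})\pi(g_{2})\pi(g_{1})^{-1}\pi(g_{2})^{-1}\bigr)\,d\mu(g_{1})d\mu(g_{2})$ by integrating out $g_{1}$ first, identifying the inner integral as a scalar via Schur's lemma, and then finishing with Schur orthogonality of irreducible characters.

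Concretely, I first use Fubini to bring the integral over $g_{1}$ inside the trace, defining, for fixed $g_{2}\in G$,
\[
T(g_{2})\;\stackrel{\df}{=}\;\int_{G}\pi(g_{1})\pi(g_{2})\pi(g_{1})^{-1}\,d\mu(g_{1})\;\in\;\End(V).
\]
The substitution $g_{1}\mapsto hg_{1}$ and left-invariance of Haar measure yield $\pi(h)T(g_{2})\pi(h)^{-1}=T(g_{2})$ for every $h\in G$, so $T(g_{2})$ intertwines $\pi$ with itself. Because $\pi$ is irreducible, Schur's lemma forces $T(g_{2})=\lambda(g_{2})\,\mathrm{Id}_{V}$ for a scalar $\lambda(g_{2})\in\C$; taking traces gives $\lambda(g_{2})=\chi_{\pi}(g_{2})/\dim V$, where $\chi_{\pi}=\trace\circ\pi$ is the character of $\pi$.

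Substituting back, and using the cyclicity of the trace together with $\chi_{\pi}(g^{-1})=\overline{\chi_{\pi}(g)}$ (valid since $\pi$ is unitary), I obtain
\[
\tau_{G,\pi}([x_{1},x_{2}])\;=\;\int_{G}\trace\bigl(T(g_{2})\pi(g_{2})^{-1}\bigr)\,d\mu(g_{2})\;=\;\frac{1}{\dim V}\int_{G}\chi_{\pi}(g_{2})\,\overline{\chi_{\pi}(g_{2})}\,d\mu(g_{2}).
\]
Schur orthogonality for the compact group $G$ asserts that irreducible characters have $L^{2}(G,\mu)$-norm equal to $1$, so the remaining integral equals $1$ and the result $\tau_{G,\pi}([x_{1},x_{2}])=1/\dim V$ follows.

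There is no real obstacle here beyond ensuring that the standard compact-group tools apply exactly as in the finite-group case: Haar measure is a probability measure, Fubini is justified because $\chi_{\pi}$ is a bounded continuous function, and both Schur's lemma and Schur orthogonality carry over verbatim to continuous finite-dimensional unitary representations of a compact Hausdorff group. The mildest subtlety is checking that $T(g_{2})\in\End(V)$ is well-defined (it is, as the integrand is continuous with values in a finite-dimensional space) and that it truly is $G$-equivariant, which is immediate from the translation invariance of $\mu$.
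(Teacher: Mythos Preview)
Your proof is correct and is essentially the classical argument the paper has in mind: the paper does not spell out a proof of this theorem (it is attributed to Frobenius without a starred proof in the appendix), but the key step you use is exactly the paper's Lemma~\ref{lem:simple-lemma}, which states $\int_{G}\pi(g)A\pi(g)^{-1}\,d\mu(g)=\frac{\trace(A)}{\dim V}\mathrm{Id}_{V}$ for irreducible $\pi$. Your computation of $T(g_{2})$ is precisely an instance of that lemma with $A=\pi(g_{2})$, followed by Schur orthogonality; nothing further is needed.
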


An analogous result was later proved by Frobenius and Schur \cite{frobenius1906reellen}.
\begin{thm}
\label{thm:frob-schur}Suppose that $\pi$ is an irreducible unitary
representation of the compact group $G$ on the vector space $V$.
Then $\tau_{G,\pi}(x_{1}^{2})$ is in $\{-1,0,1\}$ and is called
the Frobenius-Schur indicator of $\pi$, denoted by $\FS(\pi)$. The
Frobenius-Schur indicator is also given by
\[
\FS(\pi)=\begin{cases}
1 & \text{if \ensuremath{\pi} is equivalent to a real representation}\\
0 & \text{\text{if \ensuremath{\trace(\pi)} is not real}}\\
-1 & \text{if \ensuremath{\trace(\pi)} is real, but \ensuremath{\pi} is not equivalent to a real representation.}
\end{cases}
\]
\end{thm}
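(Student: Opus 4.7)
The plan is to compute $\tau_{G,\pi}(x_{1}^{2})=\int_{G}\trace(\pi(g^{2}))\,d\mu(g)$ by recognizing it as a difference of dimensions of invariant subspaces inside $V\otimes V$, and then to determine those dimensions via Schur's lemma applied to invariant bilinear forms. The central identity I would use is the standard decomposition of characters of tensor squares: for any $g\in G$,
\[
\chi_{\Sym^{2}V}(g)=\tfrac{1}{2}\bigl(\chi_{\pi}(g)^{2}+\chi_{\pi}(g^{2})\bigr),\qquad\chi_{\wedge^{2}V}(g)=\tfrac{1}{2}\bigl(\chi_{\pi}(g)^{2}-\chi_{\pi}(g^{2})\bigr),
\]
so that $\chi_{\pi}(g^{2})=\chi_{\Sym^{2}V}(g)-\chi_{\wedge^{2}V}(g)$. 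Integrating against Haar measure and using that for any finite-dimensional representation $W$ of $G$ one has $\int_{G}\chi_{W}(g)\,d\mu(g)=\dim W^{G}$, I obtain
\[
\tau_{G,\pi}(x_{1}^{2})=\dim(\Sym^{2}V)^{G}-\dim(\wedge^{2}V)^{G}.
\]

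Next I would analyze these two dimensions using Schur's lemma. Via the canonical identification $(V\otimes V)^{G}\cong\Hom_{G}(V^{*},V)$, irreducibility of $V$ forces $\dim(V\otimes V)^{G}\in\{0,1\}$, with the value $1$ occurring precisely when $V\cong V^{*}$ as $G$-representations, i.e.\ precisely when $\chi_{\pi}$ is real-valued. If $V\not\cong V^{*}$, both $(\Sym^{2}V)^{G}$ and $(\wedge^{2}V)^{G}$ vanish, giving $\tau_{G,\pi}(x_{1}^{2})=0$. Otherwise, a nonzero $G$-invariant element of $V\otimes V$ corresponds to a nondegenerate $G$-invariant bilinear form $B$ on $V$, unique up to scalar by Schur, and hence either symmetric or antisymmetric. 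In the symmetric case $(\Sym^{2}V)^{G}$ is one-dimensional and $(\wedge^{2}V)^{G}=0$, giving $+1$; in the antisymmetric case the roles swap, giving $-1$. This already proves that $\tau_{G,\pi}(x_{1}^{2})\in\{-1,0,1\}$.

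Finally, I would identify the symmetric/antisymmetric dichotomy with the real/quaternionic one. The point is that a $G$-invariant symmetric nondegenerate bilinear form on $V$, combined with the $G$-invariant Hermitian inner product coming from unitarity, produces a conjugate-linear $G$-equivariant involution of $V$ whose fixed points form a real $G$-subspace of real dimension $\dim_{\C}V$ on which $\pi$ is defined over $\R$; conversely, any real structure yields such a symmetric form. In the antisymmetric case the analogous construction yields a $G$-equivariant conjugate-linear map squaring to $-\mathrm{id}$, which is incompatible with any real structure. This matches the three cases of the theorem.

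The main obstacle I anticipate is not any single step but the compact-group bookkeeping: one must verify that all the finite-dimensional representation-theoretic facts used (Schur's lemma, the character formula $\int\chi_{W}=\dim W^{G}$, the unitarity and complete reducibility of finite-dimensional representations of $G$) go through for an arbitrary compact Hausdorff group with Haar measure $\mu$, rather than only for finite groups as in Frobenius and Schur's original treatment. Once this is in place, the argument above carries over verbatim.
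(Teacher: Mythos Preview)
Your argument is the standard, correct proof of the Frobenius--Schur theorem: the character identity $\chi_{\pi}(g^{2})=\chi_{\Sym^{2}V}(g)-\chi_{\wedge^{2}V}(g)$, integration against Haar to obtain $\dim(\Sym^{2}V)^{G}-\dim(\wedge^{2}V)^{G}$, Schur's lemma via the identification $(V\otimes V)^{G}\cong\Hom_{G}(V^{*},V)$, and then the dictionary between symmetric/antisymmetric invariant forms and real/quaternionic structures. The compact-group caveats you flag are genuine but routine, and your sketch handles them adequately.

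Note, however, that the paper does \emph{not} supply its own proof of this theorem. It is stated as a classical result attributed to Frobenius and Schur (1906) and is not among the starred statements proved in the appendix. So there is no ``paper's proof'' to compare against; your write-up is simply the standard textbook argument, and it is sound.
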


One also has the following basic observation.
\begin{lem}
\label{lem:convolution-of-words}$\star$ If $w_{1},w_{2}\in\F$,
and $w_{1}$ and $w_{2}$ are generated by disjoint sets of the $x_{i}$,
then 
\[
\mu_{w}=\mu_{w_{1}}*\mu_{w_{2}}.
\]
where $*$ denotes convolution. Hence for $\pi$ irreducible
\[
\tau_{G,\pi}(w)=\frac{1}{\dim V}\tau_{G,\pi}(w_{1})\tau_{G,\pi}(w_{2}).
\]
\end{lem}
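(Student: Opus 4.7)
The statement has two parts. For the first, I interpret $w = w_1 w_2$ (the claim is a convolution identity, so the hypothesis that the generator sets are disjoint is essential here). The plan is to show directly, from the definition of $\mu_w$ as a pushforward, that $w(\mathbf{g}) = w_1(\mathbf{g}) \cdot w_2(\mathbf{g})$ and that the two factors are independent as random variables when $\mathbf{g} \in G^r$ is sampled from $\mu^r$. Independence follows because $w_1(\mathbf{g})$ depends only on coordinates indexed by one subset $I_1 \subset \{1,\ldots,r\}$, while $w_2(\mathbf{g})$ depends only on coordinates in a disjoint subset $I_2$, and $\mu^r$ factors as the product of its marginals over any partition of the coordinates. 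The pushforward of the distribution of a product of two independent $G$-valued random variables is by definition the convolution, giving $\mu_w = \mu_{w_1} * \mu_{w_2}$.

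For the second part, the key observation, which does \emph{not} require the disjointness hypothesis, is that for any $w \in \F$ the measure $\mu_w$ on $G$ is conjugation-invariant: for any $h \in G$, $h w(g_1,\ldots,g_r) h^{-1} = w(h g_1 h^{-1}, \ldots, h g_r h^{-1})$ because $w$ is a word in the $g_i$, and simultaneous conjugation preserves $\mu^r$. Hence for an irreducible unitary representation $\pi$ on $V$, the operator
\[
T_w \stackrel{\df}{=} \int_G \pi(g)\, d\mu_w(g) \in \End(V)
\]
commutes with $\pi(G)$, so by Schur's lemma $T_w = c_w \cdot \mathrm{Id}_V$. Taking traces yields $c_w = \tau_{G,\pi}(w)/\dim V$.

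Combining the two parts, the convolution identity from the first part lets me write
\[
\tau_{G,\pi}(w) = \int_G \trace(\pi(g))\, d(\mu_{w_1} * \mu_{w_2})(g) = \trace\bigl(T_{w_1} T_{w_2}\bigr),
\]
and substituting the scalar expressions for $T_{w_1}$ and $T_{w_2}$ gives
\[
\tau_{G,\pi}(w) = \trace\!\left(\frac{\tau_{G,\pi}(w_1)}{\dim V} \cdot \frac{\tau_{G,\pi}(w_2)}{\dim V} \cdot \mathrm{Id}_V\right) = \frac{1}{\dim V}\,\tau_{G,\pi}(w_1)\,\tau_{G,\pi}(w_2),
\]
as required. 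There is no real obstacle here; the only point requiring a small amount of care is the Fubini-type justification for factoring the integral of $\trace(\pi(g_1 g_2))$ over $\mu_{w_1} \otimes \mu_{w_2}$ into the product of two matrix integrals, which is immediate because $\pi$ is continuous and $G$ is compact.
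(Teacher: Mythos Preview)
Your proof is correct and follows essentially the same approach as the paper: the first part uses the product structure of $\mu^{r}$ and the factorization of $w$ through multiplication, and the second part combines conjugation-invariance of word measures with Schur's lemma. The only cosmetic difference is that you package Schur's lemma via the scalar operators $T_{w}$, whereas the paper inserts an extra conjugation integral and invokes its Lemma~\ref{lem:simple-lemma}; the underlying content is identical.
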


From Lemma \ref{lem:convolution-of-words} and Theorems \ref{thm:frob}
and \ref{thm:frob-schur} it immediately follows that for irreducible
$\pi$
\begin{equation}
\tau_{G,\pi}(s_{g})=\frac{1}{(\dim V)^{2g-1}},\label{eq:orientable-value}
\end{equation}
and 
\begin{equation}
\tau_{G,\pi}(t_{g})=\frac{\FS(\pi)^{g}}{(\dim V)^{g-1}}.\label{eq:nonorientable-value}
\end{equation}
By Lemma \ref{lem:W-MEASURE-determined-by-character-integrals}, this
fully describes the word measures $\mu_{s_{g}}$ and $\mu_{t_{g}}$
on all compact groups. The following theorem was suggested as a line
of inquiry at the 27th International Conference in Operator Theory
in Timişoara, and has since been established to hold \cite{MP3}.
\begin{thm}[Magee-Puder]
\label{thm:surface-words-sep}If $w\in\F_{r}$ and $\mu_{w}=\mu_{s_{g}}$
on every compact group, then ($r\ge2g$, and) $w\in\Aut(\F_{r}).s_{g}$.
In other words,
\[
w\stackrel{\mathbf{CptGrp}}{\sim}s_{g}\implies w\stackrel{\Aut(\F_{r})}{\sim}s_{g}.
\]
If $w\in\F_{r}$ and $\mu_{w}=\mu_{t_{g}}$ on every compact group,
then ($r\ge g$, and) $w\in\Aut(\F_{r}).t_{g}$. In other words,
\[
w\stackrel{\mathbf{CptGrp}}{\sim}t_{g}\implies w\stackrel{\Aut(\F_{r})}{\sim}t_{g}.
\]
\end{thm}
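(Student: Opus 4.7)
The plan is as follows. By Lemma \ref{lem:W-MEASURE-determined-by-character-integrals} together with (\ref{eq:orientable-value})--(\ref{eq:nonorientable-value}), the hypothesis that $\mu_{w}=\mu_{s_{g}}$ on every compact group is equivalent to the collection of scalar identities
\[
\tau_{G,\pi}(w)=\frac{1}{(\dim V)^{2g-1}}
\]
as $G$ ranges over compact groups and $\pi$ over irreducible unitary representations on $V$; in the non-orientable case the right-hand side is $\FS(\pi)^{g}/(\dim V)^{g-1}$. The game is then to extract from these infinitely many scalar identities enough rigidity to pin $w$ down to the $\Aut(\F_{r})$-orbit of $s_{g}$ (resp.\ $t_{g}$).

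First I would specialize to $G=\U(n)$ with $\pi=\std_{n}$ the defining $n$-dimensional representation. The identity $\tau_{\U(n),\std_{n}}(w)=n^{1-2g}$ for every $n\ge 1$ can be unpacked via the Weingarten calculus on $\U(n)$. This writes $\tau_{\U(n),\std_{n}}(w)$ as a rational function of $n$ whose $1/n$-expansion is a sum over orientable ribbon-graph maps with one boundary labelled by $w$, each term contributing a monomial $n^{\chi}$ for the Euler characteristic $\chi$ of the map. The collapse of the full sum to the single monomial $n^{1-2g}$ for every $n$ forces $w$ to admit an orientable genus-$g$ surface as a minimal filling; in particular $w$ lies in a free factor $\F'\le\F_{r}$ of rank at most $2g$ and has commutator length exactly $g$ there.

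Next I would exploit the Puder--Parzanchevski framework of \cite{PP15}, applied to $G=S_{n}$. The numbers $\tau_{S_{n},\pi}(w)$ admit an expansion indexed by ``critical algebraic extensions'' of $\langle w\rangle$ inside $\F$, each weighted by a combinatorial factor tied to the Stallings core graph of the extension. Matching these expansions with those of $s_{g}$ for every $n$ and every $\pi$ identifies the full profile of algebraic extensions of $\langle w\rangle$ with that of $\langle s_{g}\rangle$. Together with the unitary information from the previous step, this gives strong combinatorial control of $w$ viewed inside the free factor $\F'\cong\F_{2g}$.

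The final step, which I expect to be the main obstacle, is a rigidity statement: any $w\in\F_{2g}$ whose orientable Euler characteristic and algebraic-extension profile match those of $s_{g}$ must lie in $\Aut(\F_{2g}).s_{g}$. This is an algebraic--combinatorial problem analogous to the rigidity at the heart of Theorem \ref{conj:measure-preserving-words-primitive}, but subtler, because $s_{g}$ has a rich family of proper algebraic extensions coming from the surface group $\Gamma_{g}=\F_{2g}/\langle s_{g}\rangle$. I would expect to invoke Dehn--Nielsen--Baer to convert the mapping-class-group action on $\Gamma_{g}$ into $\Aut(\F_{2g})$-data, and then argue that the core-graph constraints leave $w$ no choice but to lie in the orbit of $s_{g}$. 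The non-orientable case is handled identically, replacing $\U(n)$ by $\O(n)$ or $\Sp(n)$: these groups have defining representations with nonzero Frobenius--Schur indicator, so the target value $\FS(\pi)^{g}/(\dim V)^{g-1}$ is actually attained, and the remainder of the argument proceeds by analogy, with $\Lambda_{g}$ and non-orientable surfaces replacing $\Gamma_{g}$ and orientable ones.
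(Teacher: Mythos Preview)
The paper does not actually prove this theorem; it is a survey result attributed to \cite{MP3}, and the paper only lists the ingredients: the defining representations of $\U(n)$, of $\O(n)$, and of the \emph{generalized} symmetric groups $S_{n,m}$ and $S^{1}\wr S_{n}$. So the comparison is between your outline and that list of ingredients.

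Your first step, extracting Euler-characteristic and commutator-length information from $\tau_{\U(n),\std}(w)$ via the Weingarten expansion, matches the paper's description and is indeed the content of \cite{MP2}. Your use of $\O(n)$ and $\Sp(n)$ in the non-orientable case likewise matches \cite{MP4}. The substantive divergence is in your second step: you propose using the plain symmetric groups $S_{n}$ and the algebraic-extension machinery of \cite{PP15}, whereas the actual proof in \cite{MP3} uses generalized symmetric groups $S_{n,m}$ and $S^{1}\wr S_{n}$. These carry strictly more information than $S_{n}$ alone (the $m$th-root or circle labels on nonzero entries give access to finer invariants of $w$), and it is not clear that the $S_{n}$ data by itself suffices to pin down the orbit of $s_{g}$ for $g\ge 2$.

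More seriously, your third step is not a proof but an expression of hope. You write that you ``would expect'' a rigidity statement and ``would expect to invoke Dehn--Nielsen--Baer,'' but you do not say what invariant actually forces $w$ into $\Aut(\F_{2g}).s_{g}$, nor how the mapping class group of $\Gamma_{g}$ would enter. Knowing that $\cl(w)=g$ and that $w$ sits in a rank-$2g$ free factor is far from enough: there are many words of commutator length $g$ in $\F_{2g}$ not in the orbit of $s_{g}$. The genuine work in \cite{MP3} is precisely to isolate, from the generalized-symmetric-group data, an invariant sharp enough to complete this step, and your outline does not supply a substitute for it.
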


One may view this as a converse to the results of Frobenius and Schur:
the formulas (\ref{eq:orientable-value}) and (\ref{eq:nonorientable-value})
uniquely characterize the orbits of $s_{g}$ and $t_{g}$. The proof
of Theorem \ref{thm:surface-words-sep} involves an analysis of the
values $\tau_{G,\pi}(w)$ where $G,\pi$ are one of the following:
\begin{itemize}
\item $G=\U(n)$, the group of $n\times n$ complex unitary matrices, and
$\pi$ is the $n$-dimensional defining representation of $\U(n)$.
This relies on the results of the paper \cite{MP2}.
\item $G=\text{\ensuremath{\O}}(n)$, the group of $n\times n$ real orthogonal
matrices, and $\pi$ is the $n$-dimensional defining representation
of $\text{\ensuremath{\O}}(n)$. The necessary analysis here comes
from the work \cite{MP4}.
\item $G=S_{n,m}$ or $G=S^{1}\wr S_{n}$ a \emph{generalized symmetric
group, namely, }the group of all $n\times n$ complex matrices such
that any row or column contains exactly one non-zero entry, and the
non-zero entries are taken from the $m$th roots of $1$ or from the
entire unit circle $S^{1}$. The representation $\pi$ is the standard
one given by the definition of the group as a matrix group. The necessary
analysis here is developed in \cite{MP3}.
\end{itemize}
Independently, Hanany, Meiri and Puder obtained the following result
\cite{Hanany}:
\begin{thm}[Hanany-Meiri-Puder]
\label{thm:commutator-and-primitive-powers-separated-finite} Let
$w_{0}=x_{1}^{~m}$ or $w_{0}=\left[x_{1},x_{2}\right]^{m}$ for some
$m\in\N$. If $w\in\F_{r}$ induces the same measure as $w_{0}$ on
every \uline{finite} group, then $w\in\Aut(\F_{r}).w_{0}$. In
other words,
\begin{equation}
w\stackrel{\mathbf{FinGrp}}{\sim}w_{0}\implies w\stackrel{\Aut(\F_{r})}{\sim}w_{0}.\label{eq:w0}
\end{equation}
\end{thm}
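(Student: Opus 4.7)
My plan is to extract algebraic invariants of $w$ from finely-resolved representation-theoretic data on symmetric groups, and to combine these with Theorem \ref{conj:measure-preserving-words-primitive}. The first step is to establish the coarse structure of $w$: by the Lubotzky-type result quoted earlier in the excerpt, being a proper $m$-th power is detected in finite quotients of $\F_r$, so $w \stackrel{\text{\textbf{FinGrp}}}{\sim} x_1^m$ forces $w = u^m$ for some $u \in \F_r$. In the commutator case, combining this power detection with Khelif's theorem on commutator detection in finite quotients should yield $w = [v_1, v_2]^m$ for some $v_1, v_2 \in \F_r$.

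The second step is to exploit the standard representation of $S_n$. By Lemma \ref{lem:W-MEASURE-determined-by-character-integrals}, the hypothesis $w \stackrel{\text{\textbf{FinGrp}}}{\sim} w_0$ is equivalent to $\tau_{G,\pi}(w) = \tau_{G,\pi}(w_0)$ for every finite group $G$ and every irreducible $\pi$. Specializing to the standard representation of $S_n$, one has
\[
\tau_{S_n,\std}(w) \;=\; \mathbb{E}_{\sigma \in S_n^r}\bigl[\mathrm{fix}(w(\sigma)) - 1\bigr],
\]
and following \cite{PP15} this expected value admits an asymptotic expansion in $1/n$ whose leading-order data are two $\Aut(\F_r)$-orbit invariants of $w$: the \emph{primitivity rank} (the minimal rank of a subgroup $H \le \F_r$ containing $w$ in which $w$ is non-primitive), and the set of \emph{critical subgroups} realizing this rank. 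Equating the expansions for $w$ and $w_0$ would pin down these invariants for $w$.

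In the third step I would translate these invariants back into the desired algebraic conclusion. The relevant invariants of $w_0$ are explicit: for $w_0 = x_1^m$, the primitivity rank is $1$ with critical subgroup $\langle x_1 \rangle$; for $w_0 = [x_1, x_2]^m$, the primitivity rank is $2$ with a rank-$2$ free-factor critical subgroup. Combined with Step 1, this should force $\langle u \rangle$ (respectively $\langle v_1, v_2 \rangle$) to be a free factor of $\F_r$ of the appropriate rank. In the first case, $u$ is then primitive, and Theorem \ref{conj:measure-preserving-words-primitive} produces an automorphism taking $u$ to $x_1$, whence $w = u^m \in \Aut(\F_r).x_1^m$. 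In the second case, $\{v_1, v_2\}$ extends to a free basis of $\F_r$, so there is an automorphism sending $(v_1, v_2) \mapsto (x_1, x_2)$, giving $w \in \Aut(\F_r).[x_1, x_2]^m$.

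The substantive difficulty is in Step 3: ruling out exotic subgroups that realize the minimal primitivity rank but are \emph{not} of the form dictated by $w_0$. A priori, non-orbit-equivalent words can share the primitivity rank and even the number of critical subgroups, so additional invariants must be exploited --- either subleading terms of the $1/n$ expansion, or character integrals from other irreducible representations of $S_n$ and other finite groups. Closing this gap is where the genuinely new combinatorial content of \cite{Hanany} should appear, and it is especially delicate in the commutator case, whose orbit structure is richer than that of $x_1^m$ and which, unlike Step 3 for the power case, cannot simply be reduced to the primitivity theorem.
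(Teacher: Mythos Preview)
The paper does not give its own proof of this theorem; it is attributed to \cite{Hanany}, and the paper only records the ingredients: the Lubotzky power-detection result, Khelif's commutator-detection result, and a further development of the $S_n$ word-measure analysis from \cite{PP15}. On that level your proposal is well aligned with what the paper reports.

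There is, however, one structural point the paper singles out that your outline does not use. The paper states that \cite{Hanany} proves a general power-lifting principle: whenever the implication \eqref{eq:w0} holds for a word $w_0$, it automatically holds for every power $w_0^m$. This reduces both cases to $m=1$. For $w_0=x_1$ the $m=1$ case is exactly Theorem~\ref{conj:measure-preserving-words-primitive}; for $w_0=[x_1,x_2]$ the $m=1$ case is the genuinely new input. Your approach instead attacks $x_1^m$ and $[x_1,x_2]^m$ directly via primitivity-rank data, which forces you to confront, for every $m$ simultaneously, the problem you flag in Step~3 (distinct orbits can share primitivity rank and critical-subgroup count). The power-lifting reduction localizes all of that difficulty to the single word $[x_1,x_2]$, where the critical-subgroup analysis is more tractable.

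A related wrinkle: in your Step~1 for the commutator case, writing $w=u^m$ via Lubotzky and then invoking Khelif to conclude $u$ is itself a commutator is not immediate. Khelif detects non-commutators in finite quotients of $\F$, but you only control the word measures of $w=u^m$, not of $u$; passing from $\mu_{u^m}=\mu_{[x_1,x_2]^m}$ on all finite $G$ to a statement about $u$ is exactly the kind of ``$m$-th root of a measure equality'' step that the power-lifting theorem in \cite{Hanany} is designed to handle. So your Step~1 already hides part of the difficulty that the paper's cited reduction isolates.
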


In particular, Theorem \ref{thm:commutator-and-primitive-powers-separated-finite}
strengthens Theorem \ref{thm:surface-words-sep} in the case $w_{0}=\left[x_{1},x_{2}\right]$.
It is an interesting question whether Theorem \ref{thm:surface-words-sep}
can be proved for general $g$ using only finite groups $G$. The
proof of Theorem \ref{thm:commutator-and-primitive-powers-separated-finite}
relies on the results of Lubotzky \cite{thompson1997power} and Khelif
\cite{khelif2004finite} mentioned above, as well as on further developing
the analysis of word measures on $S_{n}$ from \cite{PP15}. In fact,
it is shown in \cite{Hanany} that whenever (\ref{eq:w0}) holds for
some word $w_{0}\in\F$, it also holds for every power of $w_{0}$.

\subsubsection*{Rational Functions}

A recurring theme in many of the works mentioned above is that for
many ``natural'' families of groups and representations $\left\{ \left(G_{n},\pi_{n}\right)\right\} _{n\ge N_{0}}$,
the function $\tau_{G_{n},\pi_{n}}\left(w\right)$ is given by a rational
function in $n$. For example, if $\mathrm{std}$ is the defining
$n$-dimensional representation of $\U\left(n\right)$, then for $n\ge2$
\[
\tau_{U\left(n\right),\mathrm{std}}\left(\left[x_{1},x_{2}\right]^{2}\right)=\frac{-4}{n^{3}-n}.
\]
Indeed, this phenomenon occurs for natural series of representations
of $S_{n}$ \cite{nica1994number,Linial2010} and for the defining
representations of generalized symmetric groups \cite{MP3}. Using
the Weingarten calculus developed for computing integrals over Haar-random
elements of classical compact Lie groups \cite{weingarten1978asymptotic,collins2003moments,CS},
it is shown to hold also in the case of natural families of representations
of $\U\left(n\right)$ \cite{Radulescu06,MSS07} and of $\O\left(n\right)$
and $\Sp\left(n\right)$ \cite{MP4}. The same phenomenon also occurs
for natural families of representations of $\mathrm{GL}_{n}\left(\mathbb{F}_{q}\right)$,
where $\mathbb{F}_{q}$ is a fixed finite field \cite{West}. 

These rational expressions depend on $w$, of course, but are $\mathrm{Aut}(\F)$-invariant.
This means that they should have an ``$\mathrm{Aut(}\F)$-invariant''
interpretation, not relying on combinatorial properties of $w$, but
rather on properties of $w$ as an element of the abstract free group
(with no given basis). Finding such interpretation for at least some
of terms of the rational functions is one of the main results of \cite{PP15}
in the case of $S_{n}$, of \cite{MP2} in the case of $\U\left(n\right)$,
of \cite{MP4} in the cases of $\O\left(n\right)$ and $\Sp\left(n\right)$
and of \cite{MP3} in the case of generalized symmetric groups.\medskip{}

One plausible strategy for proving Conjectures \ref{que:compact-groups-separate-orbits}
and \ref{que:finite-groups-separate-orbits} is to gather a list of
invariants of words which can be determined by word measures on groups,
and then prove that this list separates $\mathrm{Aut}(\F)$-orbits.
We have already mentioned above two invariants that can be determined
by word measures on finite groups: whether $w$ is an $n$-th power,
and whether $w$ is a simple commutator. Now we turn to a result of
a similar type, but with a richer invariant that is detected. Given
$w\in\F$, we define the \emph{commutator length }of $w$, denoted
$\cl(w)$, to be the minimum $g$ for which we can solve the equation
\[
w=[u_{1},v_{1}]\cdots[u_{g},v_{g}]
\]
for $u_{i},v_{i}\in\F$. If it is not possible to write $w$ as the
product of commutators (i.e., if $w\notin[\F,\F]$) then we say $\cl(w)=\infty$.
There is a related concept of stable commutator length. The stable
commutator length of $w$, denoted $\scl(w)$, is defined by 
\[
\scl(w)\stackrel{\df}{=}\lim_{m\to\infty}\frac{\cl(w^{m})}{m},\quad\text{if \ensuremath{w\in[\F,\F]},}
\]
or $\infty$ otherwise. The existence of this limit follows from the
subadditivity in $m$ of $\cl(w^{m})$. Stable commutator length is
an important object in geometric group theory and the theory of the
free group: see the book of Calegari \cite{calegari2009scl}. One
of the fundamental results about $\scl$ is due to Calegari \cite{CALRATIONAL}:
\begin{thm}[Calegari]
If $w\in\F$ then $\scl(w)\in\Q\cup\{\infty\}.$
\end{thm}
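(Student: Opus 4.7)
The plan is to follow Calegari's strategy of converting the computation of $\scl$ in $\F$ to a linear programming problem over a rational polyhedron, and then invoking the fact that such programs attain their optima at rational vertices.

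First, I would dispose of the trivial case: if $w \notin [\F,\F]$ then $\scl(w) = \infty \in \Q \cup \{\infty\}$ by definition, so assume $w \in [\F,\F]$. Next, I would invoke the topological/geometric interpretation of stable commutator length: taking the standard wedge of $r$ circles $X$ (which is a $K(\F,1)$), one has
\[
\scl(w) = \inf_{(S,f)} \frac{-\chi^{-}(S)}{2 n(S)},
\]
where the infimum is over compact oriented surfaces $S$ without disk or sphere components, equipped with a continuous map $f : S \to X$ such that $f|_{\partial S}$ represents $n(S)$ copies of the conjugacy class of $w$ (an \emph{admissible surface of degree $n(S)$}). The passage from algebra to topology here is standard (it follows, for instance, from Bavard duality, or via the identification of $2\cl$ with the genus of a bounding surface).

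Second, I would put admissible surfaces in combinatorial normal form. After a homotopy, one may arrange $f$ to be transverse to the midpoints of the edges of $X$, so that the preimages of these midpoints cut $S$ into finitely many polygonal disks, each of whose boundary spells a cyclic subword of $w^{\pm 1}$ with consistent edge-labels. The key finiteness observation is that because $w$ has fixed length, there are only finitely many combinatorial types $P_1,\ldots,P_k$ of polygon that can appear in such a decomposition. Gluing constraints between polygon edges (each interior $X$-edge-midpoint preimage is matched into a pair), and the requirement that the boundary represents $n(S)\cdot w$, become linear equations and inequalities with rational (indeed integer) coefficients in the nonnegative integer variables $x_i = \#\{\text{copies of }P_i\}$.

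Third, I would carry out the optimization. Both $-\chi^{-}(S)$ and $n(S)$ are linear functionals $L_{\chi}(x), L_n(x)$ in the $x_i$ with rational coefficients (determined by the Euler characteristics and boundary structure of the $P_i$). Passing to the rational cone of solutions and then to its projectivization, one sees
\[
\scl(w) = \inf_{x \in C} \frac{L_{\chi}(x)}{2 L_n(x)},
\]
where $C$ is a rational polyhedral cone. This ratio is minimized at an extreme ray of $C$ (projectivizing to a vertex of a rational polytope), hence the infimum is attained and equals the value of a rational-linear functional at a rational point, so $\scl(w) \in \Q$.

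The main obstacle is the second step: making rigorous the reduction to a \emph{finite} list of polygon types and showing that every admissible surface can be deformed, without increasing $-\chi^{-}/2n$, into one built from these polygons subject to the stated linear constraints. In Calegari's treatment this is handled via the machinery of branched surfaces / sails in measured polyhedra, and it requires care to ensure that the normal form process does not destroy admissibility or inflate $-\chi^{-}$. Once this finiteness and linearity are in place, the rationality conclusion is standard linear programming.
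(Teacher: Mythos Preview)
The paper does not contain a proof of this theorem: it is stated as a result of Calegari with a citation to \cite{CALRATIONAL}, and it is not marked with a $\star$ (the paper's convention for statements proved in the Appendix). So there is no ``paper's own proof'' to compare against.

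That said, your sketch is a fair high-level outline of Calegari's actual argument in \cite{CALRATIONAL}: reinterpret $\scl$ via admissible surfaces mapping to a $K(\F,1)$, put such surfaces in a combinatorial normal form built from finitely many polygon types, observe that both $-\chi^{-}$ and the degree $n$ become rational linear functionals on a rational polyhedral cone, and conclude rationality by linear programming. You are also right to flag the second step as the real work: the finiteness of polygon types and the fact that every admissible surface can be replaced by one in normal form without increasing $-\chi^{-}/2n$ is where Calegari's branched-surface machinery enters, and your paragraph does not supply it. As written, then, this is a correct road map rather than a proof; but since the present paper only quotes the result, that is already more than is needed here.
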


The function $\scl:\F\to\Q\cup\{\infty\}$ takes on infinitely many
values when $r\geq2$. For example, it is a result of Culler \cite{CULLER}
that $\cl([x_{1},x_{2}]^{n})=\lfloor\frac{n}{2}\rfloor+1$ and hence
\begin{align*}
\scl([x_{1},x_{2}]^{k}) & =\lim_{m\to\infty}\frac{\cl([x_{1},x_{2}]^{km})}{m}=\lim_{m\to\infty}\frac{\lfloor\frac{km}{2}\rfloor+1}{m}=\frac{k}{2}.
\end{align*}
It is even known, by Calegari \cite{calegariSails}, that if $r\geq4$,
$\scl(\F_{r})$ contains a rational with any given denominator. The
following theorem is proved in \cite[Cor. 1.11]{MP2}.
\begin{thm}[Magee-Puder]
\label{thm:scl-sep}For $w\in\F$, knowing the word measure $\mu_{w}$
on every $\U(n)$ determines $\scl(w)$. As a consequence, for $w_{1},w_{2}\in\F$,
\[
w_{1}\stackrel{\mathbf{CptGrp}}{\sim}w_{2}\implies\scl(w_{1})=\scl(w_{2}).
\]
\end{thm}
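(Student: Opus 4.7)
The plan is to recover $\scl(w)$ from the large-$n$ asymptotics of the sequence $\{\tau_{\U(n),\std}(w^m)\}_{n\geq 1}$ for each integer $m\geq 1$, and then use the definition $\scl(w) = \lim_{m\to\infty} \cl(w^m)/m$.

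First I would observe that by Lemma \ref{lem:W-MEASURE-determined-by-character-integrals}, knowing $\mu_w$ on $\U(n)$ determines $\tau_{\U(n),\pi}(w)$ for all irreducible unitary $\pi$ of $\U(n)$; in particular it determines the moments
\[
\int_{\U(n)} \trace(g^m)\,d\mu_w(g) \;=\; \tau_{\U(n),\std}(w^m), \qquad m\in\Z.
\]
Thus the hypothesis gives us the full sequence $n\mapsto \tau_{\U(n),\std}(w^m)$ for every $m\geq 1$. Next, to handle the infinite case, I would use the $\U(1)$-measure: on the abelian group $\U(1)$, the word map is $w(z_1,\ldots,z_r)=z_1^{a_1}\cdots z_r^{a_r}$, where $(a_1,\ldots,a_r)$ is the abelianization of $w$. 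Hence $\mu_w$ on $\U(1)$ is $\delta_1$ if and only if $w\in[\F,\F]$, and otherwise it is a nontrivial Haar measure on a subtorus. Since $\scl(w)=\infty$ exactly when $w\notin[\F,\F]$, this case is settled immediately from the $n=1$ data.

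The substantive step is the case $w\in[\F,\F]$. Here the plan is to appeal to the main asymptotic result of \cite{MP2}: for every $w'\in[\F,\F]\setminus\{e\}$ there is a rational expansion in $1/n$ of the form
\[
\tau_{\U(n),\std}(w') \;=\; c(w')\, n^{1-2\cl(w')} \;+\; O\!\left(n^{-1-2\cl(w')}\right)
\qquad (n\to\infty),
\]
with an explicit topological/combinatorial interpretation of the leading coefficient $c(w')$ that ensures $c(w')\neq 0$. Granting this, $\cl(w')$ can be extracted from the (known) sequence $n\mapsto\tau_{\U(n),\std}(w')$ via
\[
\cl(w') \;=\; \tfrac{1}{2}\Bigl(1-\lim_{n\to\infty}\tfrac{\log|\tau_{\U(n),\std}(w')|}{\log n}\Bigr).
\]
Applying this with $w'=w^m$ recovers $\cl(w^m)$ from the $\mu_{w^m}$-data (which is part of our input, since $\mu_{w^m}$ on $\U(n)$ is determined by $\mu_w$ on $\U(n)$ via pushforward along $g\mapsto g^m$). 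Dividing by $m$ and letting $m\to\infty$ yields $\scl(w)$, as required.

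The main obstacle is the asymptotic formula itself, i.e.\ the content of \cite{MP2}: one needs both the sharp exponent $1-2\cl(w')$ (an upper bound on the order of vanishing is relatively cheap, via a Weingarten-calculus surface-counting argument, because every contributing surface in such an expansion has Euler characteristic at most $1-2\cl(w')$) and, crucially, the non-vanishing of the leading coefficient $c(w')$. The latter requires identifying a canonical combinatorial/topological quantity (a count of minimal-genus fillings of $w'$, up to signs) that computes $c(w')$ and verifying that this count is never zero on $[\F,\F]\setminus\{e\}$. Once that input is in hand, the passage from $\cl$ to $\scl$ through the sequence $w^m$ is immediate from the definition and does not introduce further difficulties.
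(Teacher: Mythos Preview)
Your proposal is correct, and the route you take is close in spirit to how \cite{MP2} is actually used, though it is packaged differently from the formula the paper records. The paper does not give a proof here; it cites \cite[Cor.~1.11]{MP2} and then states the identity
\[
\scl(w)=-\tfrac{1}{2}\sup_{k\geq0}\lim_{n\to\infty}\frac{\log\left|\tau_{\U(n),\Sym^{k}(\std)}(w)\right|}{\log\left(n^{k}\right)},
\]
which extracts $\scl(w)$ in one shot from the large-$n$ behaviour of $w$ in the representations $\Sym^{k}(\std)$, with a supremum over $k$. You instead stay with the standard representation, apply it to the powers $w^{m}$, read off $\cl(w^{m})$ from the exponent $1-2\cl(w^{m})$ in the \cite{MP2} asymptotic for $\tau_{\U(n),\std}(w^{m})$, and only then pass to the limit $m\to\infty$.

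Both strategies rest on the same input from \cite{MP2} (the surface expansion of unitary matrix integrals and the identification of the leading exponent with a genus, i.e.\ a commutator length). The difference is that your version goes through $\cl$ of powers and then stabilises, whereas the paper's formula builds the \emph{stable} quantity directly: the parameter $k$ in $\Sym^{k}$ plays the role that your $m$ plays, a surface with $k$ boundary components each wrapping $w$ standing in for a single boundary wrapping $w^{m}$. Your two-step route is arguably more transparent; the $\Sym^{k}$ formula is more compact and reads $\scl$ off without ever computing any individual $\cl(w^{m})$. Your handling of the case $w\notin[\F,\F]$ via $\U(1)$ is correct and is a detail the paper does not spell out. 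The one place to be careful is exactly the one you flag: the non-vanishing of the leading coefficient $c(w')$ is genuine content from \cite{MP2}, not a formality, and your argument stands or falls with it.
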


The proof of Theorem \ref{thm:scl-sep} can be reinterpreted as the
establishment of the following equality:
\[
\scl(w)=-\frac{1}{2}\sup_{k\geq0}\lim_{n\to\infty}\frac{\log\left|\tau_{\U(n),\Sym^{k}(\std)}(w)\right|}{\log\left(n^{k}\right)}
\]
 where $\Sym^{k}(\std)$ is the symmetric $k^{th}$ power of the standard
representation of $\U(n)$. 

\section{The GNS construction}

In this section and the next one, we allow $\Gamma$ to be any countable
discrete group. The collection of positive definite functions $\tau$
on $\Gamma$ form a convex cone that we will denote by $\P(\Gamma)$.
The importance of positive definite functions on groups comes from
their role in the Gelfand-Naimark-Segal (GNS) construction \cite{GelfandNaimark43,Segal47}:
\begin{thm}[GNS construction]
If $\tau:\Gamma\to\C$ is a positive definite function with $\tau(e)=1$,
then there is a \textbf{GNS triple} $(\pi_{\tau},\H_{\tau},\xi_{\tau})$
where
\begin{itemize}
\item $\H_{\tau}$ is a Hilbert space with inner product $\langle\bullet,\bullet\rangle_{\tau}$,
\item $\pi_{\tau}:\Gamma\to U(\H_{\tau})$ is a homomorphism from $\Gamma$
to the group of unitary operators $U(\H_{\tau})$ on $\H_{\tau}$,
\item $\xi_{\tau}\in\H_{\tau}$ is a unit cyclic vector for the unitary
representation $\pi_{\tau}$, meaning that the linear span of $\{\,\pi_{\tau}(g)\xi_{\tau}\,:\,g\in\Gamma\,\}$
is dense in $\H_{\tau}$, and
\item we have $\tau(g)=\langle\pi_{\tau}(g)\xi_{\tau},\xi_{\tau}\rangle_{\tau}$
for all $g\in\Gamma$. 
\end{itemize}
The GNS triple associated to $\tau$ is unique up to unitary equivalence:
if $(\pi_{\tau},\H_{\tau},\xi_{\tau})$ and $(\pi'_{\tau},\H'_{\tau},\xi'_{\tau})$
are two GNS triples, then there is a unitary intertwiner $u:\H_{\tau}\to\H'_{\tau}$
such that $u(\xi_{\tau})=\xi'_{\tau}$ and for all $g\in\Gamma$,
$\pi_{\tau}(g)=u^{-1}\pi'_{\tau}(g)u$.

Conversely, if $\pi:\Gamma\to U(\H)$ is a unitary representation
of $\Gamma$ on a Hilbert space $\H$, and $\xi\in\H$ is a unit vector,
then 
\[
\tau(w)=\langle\pi(w)\xi,\xi\rangle
\]
 is a positive definite function on $\Gamma$ with $\tau(e)=1$. Moreover,
if $\xi$ is cyclic then the GNS triple $(\pi_{\tau},\H_{\tau},\xi_{\tau})$
is equivalent, in the sense described above, to $(\pi,\H,\tau)$.
\end{thm}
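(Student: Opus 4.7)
The plan is to build $\H_\tau$ as a completion of the group algebra $\C[\Gamma]$ equipped with a sesquilinear form induced by $\tau$. Concretely, I would define the form on finitely supported functions $f, g : \Gamma \to \C$ by
\[
\langle f, g\rangle_\tau \stackrel{\df}{=} \sum_{h, h' \in \Gamma} f(h) \overline{g(h')} \tau(h'^{-1} h),
\]
so that $\langle \delta_h, \delta_{h'}\rangle_\tau = \tau(h'^{-1}h)$. Applying the positive definiteness of $\tau$ to the coefficients $\alpha_\gamma \stackrel{\df}{=} \overline{f(\gamma^{-1})}$ is exactly the assertion that this form is positive semi-definite. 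Cauchy--Schwarz, which is automatic for any positive semi-definite Hermitian form, then implies that the null set $N \stackrel{\df}{=} \{f : \langle f, f\rangle_\tau = 0\}$ is a linear subspace. I would take $\H_\tau$ to be the Hilbert space completion of the quotient $\C[\Gamma]/N$.

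Next I would define the representation $\pi_\tau$ via the left regular action $(L_g f)(h) \stackrel{\df}{=} f(g^{-1}h)$ on $\C[\Gamma]$. A direct calculation shows that $L_g$ preserves $\langle \cdot, \cdot\rangle_\tau$, since $\langle L_g \delta_h, L_g \delta_{h'}\rangle_\tau = \tau((gh')^{-1}(gh)) = \tau(h'^{-1} h)$. Consequently $L_g$ preserves $N$ and descends to a well-defined isometry of $\C[\Gamma]/N$, which extends by continuity to a unitary operator $\pi_\tau(g)$ on $\H_\tau$. The assignment $g \mapsto \pi_\tau(g)$ is a homomorphism because $L$ is. Letting $\xi_\tau$ be the image of $\delta_e$ in $\H_\tau$, one has $\langle \xi_\tau, \xi_\tau\rangle_\tau = \tau(e) = 1$; the orbit $\{\pi_\tau(g)\xi_\tau\}_{g \in \Gamma}$ is the image of the standard basis of $\C[\Gamma]$ and hence spans a dense subspace; and
\[
\langle \pi_\tau(g) \xi_\tau, \xi_\tau\rangle_\tau = \langle \delta_g, \delta_e\rangle_\tau = \tau(e^{-1} g) = \tau(g).
\]

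For uniqueness, given two GNS triples $(\pi, \H, \xi)$ and $(\pi', \H', \xi')$ attached to the same $\tau$, I would define $u$ on the dense subspace $\mathrm{span}\{\pi(g)\xi : g \in \Gamma\}$ by $u(\pi(g)\xi) \stackrel{\df}{=} \pi'(g)\xi'$ and extend linearly. Both well-definedness and isometry reduce to the single identity $\langle \pi(g)\xi, \pi(g')\xi\rangle = \tau(g'^{-1}g) = \langle \pi'(g)\xi', \pi'(g')\xi'\rangle$, which uses only that both triples reproduce $\tau$. The intertwining relation $u \pi(g) = \pi'(g) u$ is immediate on this dense subspace, and extending $u$ by continuity yields the desired unitary intertwiner.

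For the converse, if $\pi : \Gamma \to U(\H)$ is a unitary representation and $\xi \in \H$ is a unit vector, set $\tau(g) = \langle \pi(g)\xi, \xi\rangle$. Writing $\eta_g \stackrel{\df}{=} \pi(g^{-1})\xi$, unitarity of $\pi$ yields $\tau(g' g^{-1}) = \langle \eta_g, \eta_{g'}\rangle$, and therefore
\[
\sum_{g, g' \in S} \tau(g' g^{-1}) \alpha_{g'} \overline{\alpha_g} = \Bigl\| \sum_{g \in S} \overline{\alpha_g}\, \eta_g \Bigr\|^2 \geq 0,
\]
verifying positive definiteness, while $\tau(e) = \|\xi\|^2 = 1$. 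The equivalence of $(\pi, \H, \xi)$ with the GNS triple attached to this $\tau$, under the cyclicity assumption on $\xi$, then follows from the uniqueness statement. The main subtle point in the argument is verifying positive semi-definiteness of $\langle\cdot,\cdot\rangle_\tau$ on $\C[\Gamma]$ and the $L_g$-invariance of $N$; everything else is a careful but routine unpacking of definitions.
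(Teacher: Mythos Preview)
Your argument is correct and is precisely the standard construction. Note, however, that the paper does not actually supply a proof of this theorem: the GNS construction is stated there as a classical result with references to \cite{GelfandNaimark43,Segal47}, and it is not among the statements marked with a $\star$ whose proofs appear in the Appendix. So there is no ``paper's own proof'' to compare against; what you have written is the textbook argument that those references contain.

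A couple of minor remarks on presentation. First, your substitution $\alpha_\gamma = \overline{f(\gamma^{-1})}$ to deduce positive semi-definiteness of $\langle\cdot,\cdot\rangle_\tau$ from Definition~\ref{def:positive-definite} is correct but slightly roundabout; it is perhaps cleaner to observe directly that positive definiteness of $\tau$ is equivalent to the matrices $[\tau(\gamma^{-1}\gamma')]$ being positive semi-definite (this is the conjugate transpose of the matrix in the definition, hence also PSD), and then take $\alpha_h = f(h)$. Second, in the uniqueness step you should also remark that $u$ is surjective onto a dense subspace of $\H'$ (by cyclicity of $\xi'$), so that its continuous extension is genuinely a unitary and not merely an isometry. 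Neither of these affects correctness.
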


\begin{example}[Regular representation]
The function $\tau_{\lambda}$ introduced in Example \ref{exa:reg-defn}
is obtained as a matrix coefficient in the \emph{regular representation
}of $\F$. Indeed let $\lambda:\F\to U(\ell^{2}(\F))$ denote the
left regular representation. Then 
\[
\tau_{\lambda}(w)=\langle\lambda(w)\delta_{e},\delta_{e}\rangle=\delta_{we}\quad\forall w\in\F.
\]
Conversely, the GNS triple associated to $\lambda$ is, up to isomorphism,
$(\lambda,\ell^{2}(\F),\delta_{e})$. 
\end{example}

\begin{example}[Trivial representation]
The function $\tau_{\triv}$ introduced in Example \ref{exa:triv-definition}
is obtained as a matrix coefficient in the trival representation $\triv:\F\to U(\C)$.
Indeed, $1\in\C$ is a cyclic vector for this representation. Thus
\[
\tau_{\triv}(w)=\langle\triv(w)1,1\rangle=\langle1,1\rangle=1\quad\forall w\in\F.
\]
\end{example}

\begin{example}[Compact group construction, continued]
\label{exa:compact-group-construction II}Let $\tau_{G,\pi}$ be
the positive definite function constructed in Example \ref{exa:compact-group-construction}.
Recall that $\pi:G\to U(V)$ is a finite dimensional unitary representation
of the compact group $G$. In this case, $\tau_{G,\pi}$ arises as
a matrix coefficient in a subrepresentation of the direct integral\footnote{The direct integral of representations is a generalization of the
direct sum that uses a topological space with a Borel measure to index
the summation, instead of a discrete set. For details see \cite[\S 2.4]{Mackey}.} 
\[
\Pi_{G,\pi}=\int_{G^{r}}^{\oplus}\Pi_{\mathbf{g}}d\mu^{r}(\mathbf{g})
\]
where $\Pi_{\mathbf{g}}:\F\to U(\End(V))$, $\Pi_{\mathbf{g}}(w).A=\pi(w(\mathbf{g}))A$.
The inner product on $\End(V)$ is given by $\langle A,B\rangle=\trace(AB^{*})$.
The representation is generated by the cyclic vector
\[
\xi=\int_{G^{r}}^{\oplus}\mathrm{Id}_{\End(V)}d\mu^{r}(\mathbf{g})
\]
and $\H$ is the closed linear span of $\{\Pi_{G,\pi}(w)\xi\,:\,w\in\F\}$.
Then we have
\[
\langle\Pi_{G,\pi}(w)\xi,\xi\rangle=\int_{G^{r}}\trace(\pi(w(\mathbf{g})))d\mu^{r}(\mathbf{g})=\tau_{G,\pi}(w).
\]
\end{example}

\begin{example}[Characteristic subgroup construction]
\label{exa:char-subgroup}We now turn to yet another type of examples
of $\Aut(\F)$-invariant positive definite functions on $\F$. Let
$\Lambda\leq\F$ be a \emph{characteristic subgroup}, meaning that
$\alpha(\Lambda)=\Lambda$ for any $\alpha\in\Aut(\F)$. As conjugation
by elements of $\F$ gives automorphisms, $\Lambda$ is necessarily
normal in $\F$. Some examples of characteristic subgroups of $\F$
include
\begin{itemize}
\item The commutator subgroup $[\F,\F]$.
\item Groups in the derived series of $\F$, for example, $\left[[\F,\F],[\F,\F]\right]$.
\item Groups in the lower or upper central series of $\F$.
\item If $H$ is any group, the intersection of all kernels of homomorphisms
$\F\to H$ (this may be trivial).
\end{itemize}
Note that $\Aut(\F)$ acts by automorphisms on the group $\F/\Lambda$.
We obtain an $\Aut(\F)$-invariant positive definite function on $\F$
denoted by $\tau_{\Lambda}$ and given by
\[
\tau_{\Lambda}(w)=\delta_{w\Lambda,e}=\begin{cases}
1 & \text{if}~\ensuremath{w}\ensuremath{\in\Lambda}\\
0 & \text{if}~\ensuremath{w\notin\Lambda}.
\end{cases}
\]
Indeed, $\tau_{\Lambda}$ arises from the GNS triple $(\pi,\H,\xi)$
where $\H=\ell^{2}(\F/\Lambda)$, $\pi$ is the quasi-regular representation,
and $\xi=\delta_{e}\in\ell^{2}(\F/\Lambda)$. It is clear that $\tau_{\Lambda}$
is $\Aut(\F)$-invariant since $\Lambda$ is characteristic in $\F$.\footnote{More generally, if $T$ is any positive definite function on $\F/\Lambda$,
then $\tau(w)\stackrel{\df}{=}T(w\Lambda)$ will be a positive definite
function on $\F$. It will be $\Aut(\F)$-invariant if $T$ is invariant
under the induced action of $\Aut(\F)$ on $\F/\Lambda$. Since classifying
these $T$ in general seems hard, we do not pursue this in detail
here.}
\end{example}

Evidently, the subgroup of $\F$ generated by an $\Aut(\F)$-orbit
is characteristic. Therefore, the construction of Example \ref{exa:char-subgroup}
allows us to make a little progress on Question \ref{que:Do--invariant-positive-functions-sep-orbits}.
\begin{prop}
\label{prop:char-subgroup-orbits}$\star$ If $w_{1},w_{2}\in\F$
and the orbits $\Aut(\F).w_{1}$ and $\Aut(\F).w_{2}$ generate different
subgroups of $\F$ then
\[
w_{1}\stackrel{\mathbf{PosDef}}{\not\sim}w_{2}.
\]
\end{prop}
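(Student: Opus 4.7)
My plan is to exhibit an explicit $\Aut(\F)$-invariant positive definite function that separates $w_{1}$ and $w_{2}$, using the characteristic subgroup construction of Example \ref{exa:char-subgroup}.

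For $i\in\{1,2\}$, let $\Lambda_{i}$ denote the subgroup of $\F$ generated by the orbit $\Aut(\F).w_{i}$. The first step is to check that $\Lambda_{i}$ is characteristic: for any $\alpha\in\Aut(\F)$, the set $\alpha(\Aut(\F).w_{i})=\Aut(\F).w_{i}$, so $\alpha$ permutes a generating set of $\Lambda_{i}$ and hence preserves $\Lambda_{i}$. This is the observation alluded to just before the proposition.

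The next step is the key dichotomy. By hypothesis $\Lambda_{1}\neq\Lambda_{2}$, and I claim this forces either $w_{1}\notin\Lambda_{2}$ or $w_{2}\notin\Lambda_{1}$. Indeed, if both $w_{1}\in\Lambda_{2}$ and $w_{2}\in\Lambda_{1}$ held, then since $\Lambda_{2}$ is $\Aut(\F)$-invariant we would have $\Aut(\F).w_{1}\subseteq\Lambda_{2}$, and hence $\Lambda_{1}=\langle\Aut(\F).w_{1}\rangle\subseteq\Lambda_{2}$; symmetrically $\Lambda_{2}\subseteq\Lambda_{1}$, giving $\Lambda_{1}=\Lambda_{2}$, a contradiction.

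Now I apply Example \ref{exa:char-subgroup}: the function
\[
\tau_{\Lambda_{i}}(w)=\begin{cases}1 & w\in\Lambda_{i},\\ 0 & w\notin\Lambda_{i},\end{cases}
\]
is an $\Aut(\F)$-invariant positive definite function on $\F$. Without loss of generality assume $w_{1}\notin\Lambda_{2}$ (the other case is symmetric). Since $w_{2}\in\Lambda_{2}$ tautologically, we obtain $\tau_{\Lambda_{2}}(w_{1})=0\neq 1=\tau_{\Lambda_{2}}(w_{2})$, which witnesses $w_{1}\stackrel{\mathbf{PosDef}}{\not\sim}w_{2}$. There is no real obstacle here; the content of the argument is packaged into Example \ref{exa:char-subgroup}, and the only nontrivial step is the short dichotomy argument in the second paragraph.
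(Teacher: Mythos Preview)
Your proof is correct and is essentially the same as the paper's: both define $\Lambda_i=\langle\Aut(\F).w_i\rangle$, observe these are characteristic, argue that $\Lambda_1\neq\Lambda_2$ forces $w_1\notin\Lambda_2$ or $w_2\notin\Lambda_1$ (equivalently, the paper phrases this via the orbit intersections $\Aut(\F).w_i\cap\Lambda_j$), and then use $\tau_{\Lambda_j}$ from Example~\ref{exa:char-subgroup} to separate. The only cosmetic difference is which index you carry through the WLOG step.
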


We stress, however, that in general different $\Aut(\F)$-orbits in
$\F$ may generate the same subgroup. This is illustrated in the following
two examples:
\begin{example}
While $\Aut(\F).w$ and $\Aut(\F).w^{-1}$ generate the same subgroup,
there is no reason for $w$ and $w^{-1}$ to be in the same orbit.
For example, $w=x^{2}y^{2}xy^{-1}$ is not in the same orbit as its
inverse.
\end{example}

\begin{example}
Let $r=2$ and $w=x_{1}^{2}x_{2}^{3}$. Let $\Lambda$ be the group
generated by $\Aut(\F_{2}).w$. Then $\Lambda$ also contains $w'=x_{2}^{-2}x_{1}^{3}$,
since $(x_{1},x_{2})\mapsto(x_{2}^{-1},x_{1})$ is in $\Aut(\F_{2})$.
However,
\[
ww'=x_{1}^{2}x_{2}x_{1}^{3}
\]
is in $\Lambda$, and is primitive, since $x_{1}^{2}x_{2}x_{1}^{3}$
and $x_{1}$ generate $\F_{2}$. Since $\Lambda$ is characteristic,
all primitive elements must be in $\Lambda$, and in particular, $x_{1}$
and $x_{2}$ are in $\Lambda$, so $\Lambda=\F_{2}$. However, $w$
itself is not primitive: this can be inferred from Whitehead algorithm
\cite[Chapter I.4]{Lyndon1977}, or from the fact that $\tau_{S_{3},\mathrm{std}}\left(w\right)=1.5\ne1=\tau_{S_{n},\mathrm{std}}\left(x_{1}\right)$.
I.e., $\Aut(\F).w\neq\Aut(\F).x_{1}$, but $\Aut(\F).x_{1}$ and $\Aut(\F).w$
generate the same group.
\end{example}

Before moving on, we address the following question. What does the
GNS construction tell us about $\Aut(\Gamma)$-invariant positive
definite functions? We will denote by $\P_{1}(\Gamma)$ the elements
$\tau$ of $\P(\Gamma)$ with $\tau(e)=1$. Suppose that $\tau$ is
an element of $\P_{1}(\Gamma)^{\Aut(\Gamma)}$, the elements of $\P_{1}(\Gamma)$
which are invariant under $\Aut(\Gamma)$. In this case, one can extend
$\tau$ to a positive definite function $\tau^{+}$ on the semidirect
product $\Gamma\rtimes\Aut(\Gamma)$ by the formula
\[
\tau^{+}(\gamma,\alpha)\stackrel{\df}{=}\tau(\gamma).
\]

\begin{lem}
\label{lem:pos-def-on-semidirect-product}$\star$ If $\tau\in\P_{1}(\Gamma)^{\Aut(\Gamma)}$,
then $\tau^{+}$ is a positive definite function on $\Gamma\rtimes\Aut(\Gamma)$.
\end{lem}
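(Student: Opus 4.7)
The plan is to produce an explicit unitary representation of $\Gamma \rtimes \Aut(\Gamma)$ on a Hilbert space, with a unit cyclic vector whose matrix coefficient is exactly $\tau^{+}$; positive definiteness of $\tau^{+}$ will then follow immediately from the ``conversely'' half of the GNS theorem. Starting data: the GNS triple $(\pi_{\tau}, \H_{\tau}, \xi_{\tau})$ associated to $\tau$.

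First I would, for each $\alpha \in \Aut(\Gamma)$, consider the pulled-back representation $\pi_{\tau} \circ \alpha : \Gamma \to U(\H_{\tau})$. Because $\alpha$ is a bijection of $\Gamma$, the set $\{\pi_{\tau}(\alpha(g))\xi_{\tau} : g \in \Gamma\}$ coincides with $\{\pi_{\tau}(g)\xi_{\tau} : g \in \Gamma\}$, so $\xi_{\tau}$ remains cyclic for $\pi_{\tau}\circ\alpha$. The $\Aut(\Gamma)$-invariance of $\tau$ gives that the matrix coefficient is unchanged:
\[
\langle (\pi_{\tau}\circ\alpha)(g)\xi_{\tau}, \xi_{\tau} \rangle_{\tau} = \tau(\alpha(g)) = \tau(g).
\]
The uniqueness clause of the GNS theorem then produces a unique unitary $U_{\alpha} \in U(\H_{\tau})$ with $U_{\alpha}\xi_{\tau} = \xi_{\tau}$ and
\[
U_{\alpha}\,\pi_{\tau}(g)\,U_{\alpha}^{-1} = \pi_{\tau}(\alpha(g)) \qquad \forall g \in \Gamma.
\]

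Next I would verify that $\alpha \mapsto U_{\alpha}$ is a group homomorphism $\Aut(\Gamma) \to U(\H_{\tau})$. Indeed, $U_{\alpha_{1}}U_{\alpha_{2}}$ fixes $\xi_{\tau}$ and conjugates $\pi_{\tau}(g)$ to $\pi_{\tau}((\alpha_{1}\alpha_{2})(g))$, so by the uniqueness of the intertwiner it must equal $U_{\alpha_{1}\alpha_{2}}$. Now define
\[
\tilde{\pi} : \Gamma \rtimes \Aut(\Gamma) \to U(\H_{\tau}), \qquad \tilde{\pi}(\gamma, \alpha) \stackrel{\df}{=} \pi_{\tau}(\gamma)\,U_{\alpha}.
\]
A short calculation using $U_{\alpha_{1}}\pi_{\tau}(\gamma_{2}) = \pi_{\tau}(\alpha_{1}(\gamma_{2}))U_{\alpha_{1}}$ and the homomorphism property of $\alpha \mapsto U_{\alpha}$ shows that $\tilde{\pi}$ respects the semidirect product law $(\gamma_{1},\alpha_{1})(\gamma_{2},\alpha_{2}) = (\gamma_{1}\alpha_{1}(\gamma_{2}),\alpha_{1}\alpha_{2})$. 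Finally, since $U_{\alpha}\xi_{\tau} = \xi_{\tau}$,
\[
\langle \tilde{\pi}(\gamma, \alpha)\xi_{\tau}, \xi_{\tau} \rangle_{\tau} = \langle \pi_{\tau}(\gamma)\xi_{\tau}, \xi_{\tau} \rangle_{\tau} = \tau(\gamma) = \tau^{+}(\gamma, \alpha),
\]
so $\tau^{+}$ is a matrix coefficient of a unitary representation of $\Gamma \rtimes \Aut(\Gamma)$, and is therefore positive definite.

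The main obstacle is really just bookkeeping: one must check that the intertwiners $U_{\alpha}$ are canonical enough to be multiplicative, and that the convention for the semidirect product matches the covariance relation $U_{\alpha}\pi_{\tau}(g)U_{\alpha}^{-1} = \pi_{\tau}(\alpha(g))$. An alternative route, which avoids GNS entirely, is to expand $\sum_{i,j} c_{i}\overline{c_{j}}\,\tau^{+}\!\left((\gamma_{i},\alpha_{i})(\gamma_{j},\alpha_{j})^{-1}\right)$ for a finite collection of pairs, group terms by the value of $\alpha_{i}\alpha_{j}^{-1}$, and reduce non-negativity to positive definiteness of $\tau$ after applying the corresponding automorphisms; the GNS approach above is cleaner and more conceptual, and I would present it in that form.
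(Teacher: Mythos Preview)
Your GNS-based argument is correct: the uniqueness clause of the GNS theorem does force the intertwiners $U_{\alpha}$ to be multiplicative, the covariance relation matches the semidirect-product convention used in the paper, and realizing $\tau^{+}$ as a diagonal matrix coefficient of $\tilde{\pi}$ settles positive definiteness. The paper, however, takes the direct route you sketch at the end --- but in a slicker form than you anticipate. There is no need to group terms by the value of $\alpha_{i}\alpha_{j}^{-1}$: one simply computes
\[
\tau^{+}\!\big((\gamma_{i},\alpha_{i})(\gamma_{j},\alpha_{j})^{-1}\big)
=\tau\!\big(\gamma_{i}\,[\alpha_{i}\alpha_{j}^{-1}](\gamma_{j}^{-1})\big)
=\tau\!\big(\alpha_{i}^{-1}(\gamma_{i})\,\alpha_{j}^{-1}(\gamma_{j})^{-1}\big),
\]
the last step being a single application of $\Aut(\Gamma)$-invariance (apply $\alpha_{i}^{-1}$ inside $\tau$). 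The resulting matrix is then exactly the $\tau$-matrix for the sequence $\{\alpha_{i}^{-1}(\gamma_{i})\}$, hence positive semidefinite. So the paper's proof is a two-line computation. Your approach is longer but yields more: it explicitly exhibits the GNS representation of $\tau^{+}$ as an extension of $\pi_{\tau}$ to $\Gamma\rtimes\Aut(\Gamma)$, which is conceptually valuable (and indeed this extended representation is what the paper goes on to use in Example~\ref{exa:compact-group-construction-III} and Theorem~\ref{thm:extreme-functions-and-ergodic-actions}).
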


Let $(\pi_{\tau^{+}},\H_{\tau^{+}},\xi_{\tau^{+}})$ be the associated
GNS triple to $\tau^{+}$. Since $\langle\pi(e,\alpha)\xi_{\tau^{+}},\xi_{\tau^{+}}\rangle=1$,
the cyclic vector $\xi_{\tau^{+}}$ is an invariant vector for the
embedded copy of $\Aut(\Gamma)$ in $\Gamma\rtimes\Aut(\Gamma)$ under
$\alpha\mapsto(e,\alpha)$. The map $\tau\mapsto\tau^{+}$ gives a
linear embedding of $\P_{1}(\Gamma)^{\Aut(\Gamma)}$ into $\P_{1}(\Gamma\rtimes\Aut(\Gamma)).$ 

\begin{example}[Compact group construction, continued]
\label{exa:compact-group-construction-III}Recall the notations of
Examples \ref{exa:compact-group-construction} and \ref{exa:compact-group-construction II}.
Let $\H_{0}$ denote the Hilbert space
\[
\H_{0}=\int_{G^{r}}^{\oplus}\End(V)d\mu^{r}(\mathbf{g}).
\]
We will describe a unitary representation of $\F\rtimes\Aut(\F)$
on this Hilbert space as follows. A vector in $\H_{0}$ is (an equivalence
class) of an $L^{2}$ function $\mathbf{g}\mapsto B_{\mathbf{g}}$
for $\mathbf{g}\in G^{r}$ and \textbf{$B_{\mathbf{g}}\in\End(V)$.
}We define for $(w,\alpha)\in\F\rtimes\Aut(\F)$
\[
\Pi_{0}(w,\alpha)\{\mathbf{g}\mapsto B_{\mathbf{g}}\}=\{\mathbf{g\mapsto}\pi(w(\mathbf{g}))B_{\alpha^{-1}(\mathbf{g})}\}.
\]
It is straightforward to check this this gives a unitary representation
of $\F\rtimes\Aut(\F)$ on $\H_{0}$, using Lemma \ref{lem:Haar-measure-Aut-invariant}.
Now let $\Pi_{G,\pi}^{+},\H_{G,\pi}^{+}$ be the subrepresentation
of $\Pi_{0}$ generated by the vector 
\[
\xi_{G,\pi}^{+}=\frac{1}{\sqrt{\dim V}}\int_{G^{r}}^{\oplus}\mathrm{Id}_{\End(V)}d\mu^{r}(\mathbf{g}).
\]
Let $\tilde{\tau}_{G,\pi}=\frac{1}{\dim V}\tau_{G,\pi}\in\P_{1}(\F)^{\Aut(\F)}$.
Now one has 
\[
\tilde{\tau}_{G,\pi}^{+}(w,\alpha)=\langle\Pi_{0}(w,\alpha)\xi_{G,\pi}^{+},\xi_{G,\pi}^{+}\rangle=\frac{1}{\dim V}\int_{G^{r}}\trace(\pi(w(\mathbf{g})))d\mu^{r}(\mathbf{g})=\tilde{\tau}_{G,\pi}(\alpha).
\]
Thus we have constructed an explicit model for the GNS triple associated
to $\tilde{\tau}_{G,\pi}^{+}$.
\end{example}

\section{Extremal functions}

To study $\P(\Gamma)$ it is convenient to introduce an operator algebra.
We begin with $\C[\Gamma]$, the group algebra of $\Gamma$. We define
a norm on $\C[\Gamma]$ by 
\[
\|a\|=\sup_{\pi}\|\pi(a)\|
\]
where $\pi$ ranges over all cyclic $*$-representations\footnote{A \emph{$*$-representation} $(\pi,V)$ of $\C[\Gamma]$ consists
of a Hilbert space $V$ and a $\C$-algebra homomorphism $\pi$ from
$\C[\Gamma]$ to the bounded endomorphisms $B(V)$ of $V$ that also
respects the star operations. The star operation on $\C[\Gamma]$
takes $\sum a_{\gamma}\gamma$ to $\sum\overline{a_{\gamma}}\gamma^{-1}$
and the star operation on $B(V)$ is conjugate transpose. The $*$-representation
$(\pi,V)$ is \emph{cyclic} if $V$ contains a vector $v$ such that
$\pi(\C(\Gamma)).v$ is dense in $V$.} of $\C[\Gamma]$. The completion of $\C[\Gamma]$ with respect to
this norm is a $C^{*}$-algebra called the (full) group $C^{*}$-algebra
of $\Gamma$, denoted by $C^{*}(\Gamma)$. 

Any $\tau\in\P(\Gamma)$ extends to a continuous linear functional
$\tau$ on $C^{*}(\Gamma)$ with $\|\tau\|=\tau(e)$. Therefore $\P_{1}(\Gamma)$
linearly embeds into the unit ball of the linear dual of $C^{*}(\Gamma)$.
The set $\P_{1}(\Gamma)$ is closed in the weak-$*$ topology and
hence by the Banach-Alaoglu Theorem, $\P_{1}(\Gamma)$ is weak-$*$
compact. Since $\P_{1}(\Gamma)$ is also obviously convex, the Krein-Milman
Theorem tells us that $\P_{1}(\Gamma)$ is the (weak-$*$) closed
convex hull of its extreme points that we will denote by $\overline{\hull}\ext[\P_{1}(\Gamma)]$.
The classical relevance of the extreme points is the following result
from \cite{Segal47}:
\begin{thm}
\label{thm:extreme-implies-irreducible}For $\tau\in\P_{1}(\Gamma)$,
$\tau\in\ext[\P_{1}(\Gamma)]$ if and only if the GNS representation
$\pi_{\tau}$ is irreducible.
\end{thm}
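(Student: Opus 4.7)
My plan is to prove the equivalence via the classical correspondence between ``sub''-positive-definite functions of $\tau$ and positive operators in the commutant $\pi_\tau(\Gamma)'$, and then invoke Schur's lemma. First I would rephrase extremality in a form better suited to the GNS picture: $\tau \in \ext[\P_1(\Gamma)]$ if and only if every $\sigma \in \P(\Gamma)$ with $\tau - \sigma \in \P(\Gamma)$ is of the form $\sigma = c\tau$ for some $c \in [0,1]$. This is routine, since any such $\sigma$ with $0 < \sigma(e) < 1$ yields the convex decomposition
\[
\tau \;=\; \sigma(e)\cdot\tfrac{\sigma}{\sigma(e)} \;+\; (1-\sigma(e))\cdot\tfrac{\tau-\sigma}{1-\sigma(e)},
\]
and any nontrivial convex decomposition produces such a $\sigma$.

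The heart of the argument is to establish the bijection
\[
\{\,T \in \pi_\tau(\Gamma)' : 0 \leq T \leq I\,\} \;\longleftrightarrow\; \{\,\sigma \in \P(\Gamma) : \tau - \sigma \in \P(\Gamma)\,\},
\qquad T \longmapsto \sigma_T(g) \mathrel{\mathop:}= \langle \pi_\tau(g) T\xi_\tau, \xi_\tau\rangle_\tau.
\]
The forward map is easy: since $T \geq 0$ commutes with $\pi_\tau(\Gamma)$, its positive square root $T^{1/2}$ lies in the same commutant, so $\sigma_T(g) = \langle \pi_\tau(g) T^{1/2}\xi_\tau, T^{1/2}\xi_\tau\rangle_\tau$ is manifestly a matrix coefficient and thus in $\P(\Gamma)$. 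The same identity applied to $I - T$ shows $\tau - \sigma_T \in \P(\Gamma)$.

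The reverse direction is the technical crux. Given $\sigma$ with $\tau - \sigma \in \P(\Gamma)$, extend $\sigma$ linearly to $\C[\Gamma]$ and define the sesquilinear form $B(a,b) = \sigma(b^{*}a)$. Positivity of $\sigma$ and of $\tau - \sigma$ give $0 \leq B(a,a) \leq \tau(a^{*}a) = \|\pi_\tau(a)\xi_\tau\|_\tau^2$, so $B$ descends to a bounded positive sesquilinear form on the dense subspace $\pi_\tau(\C[\Gamma])\xi_\tau \subset \H_\tau$, and extends by continuity to all of $\H_\tau$. By Riesz representation there is a unique bounded operator $T$ on $\H_\tau$ with $0 \leq T \leq I$ and $\sigma_T = \sigma$; a direct computation with $B(ga, gb) = B(a,b)$ (for $g \in \Gamma$, $a, b \in \C[\Gamma]$) shows $T$ commutes with every $\pi_\tau(g)$.

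With the bijection in place, the theorem is immediate: by Schur's lemma, $\pi_\tau$ is irreducible if and only if $\pi_\tau(\Gamma)' = \C I$, equivalently the only $T$ in the commutant with $0 \leq T \leq I$ are scalars $cI$ with $c \in [0,1]$, equivalently every $\sigma \leq \tau$ is a scalar multiple of $\tau$, equivalently $\tau$ is extreme. I expect the main obstacle to be the reverse direction of the bijection: making the form $B$ genuinely well-defined on $\H_\tau$ (rather than only on $\C[\Gamma]$) requires the domination $\sigma \leq \tau$ to force $\tau$-null vectors to be $\sigma$-null, and verifying that the resulting $T$ intertwines $\pi_\tau$ requires carefully tracking $\Gamma$-equivariance of $B$.
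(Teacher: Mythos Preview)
Your proof is correct and follows the standard classical argument. Note, however, that the paper does not supply its own proof of this theorem: it is stated without the $\star$ marker and attributed to Segal \cite{Segal47}, so there is no in-paper proof to compare against. What you have written is essentially the textbook proof (as found, e.g., in Dixmier or in \cite[Theorem~C.5.2]{BdlHV}, which the paper cites elsewhere): reformulate extremality as ``no nontrivial positive-definite function is dominated by $\tau$'', set up the order-isomorphism between dominated functions and positive contractions in the commutant via the sesquilinear form $B$, and conclude by Schur's lemma. The only point worth tightening in a final write-up is the well-definedness step you flagged yourself: you should state explicitly that $\sigma(a^*a)=0$ whenever $\tau(a^*a)=0$ (immediate from $0\le\sigma\le\tau$ on positives), and then use Cauchy--Schwarz for the positive form $B$ to conclude $B(a,b)=0$ for all $b$, so that $B$ descends to $\pi_\tau(\C[\Gamma])\xi_\tau$.
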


We may improve on the fact that $\P_{1}(\Gamma)=\overline{\hull}\ext[\P_{1}(\Gamma)]$
by means of Choquet theory. Since $\Gamma$ is countable, $C^{*}(\Gamma)$
is separable, so $\P_{1}(\Gamma)$ is metrizable. Choquet's Theorem
\cite[pg.\;14]{Phelps} gives in the current context the following.
\begin{thm}[Choquet's Theorem for $\P_{1}(\Gamma)$]
\label{thm:choquet}If $\tau\in\P_{1}(\Gamma)$, there is a (regular)
Borel probability measure $\nu_{\tau}$ supported on $\ext[\P_{1}(\Gamma)]$
such that for any $g\in\Gamma$,
\[
\tau(g)=\int\tilde{\tau}(g)d\nu_{\tau}(\tilde{\tau}).
\]
In this case, we say that $\nu_{\tau}$ \textbf{represents $\tau$.}
\end{thm}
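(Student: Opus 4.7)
The plan is to derive Theorem \ref{thm:choquet} as a direct application of the classical Choquet theorem, as stated in \cite{Phelps}. The preceding discussion in the text has already done most of the setup: $\P_1(\Gamma)$ is identified as a weak-$*$ closed (hence weak-$*$ compact, by Banach--Alaoglu) convex subset of the unit ball of the continuous dual of $C^*(\Gamma)$. What remains is to verify the hypotheses of Choquet's theorem in this locally convex setting, and then translate the abstract integral representation into the pointwise formula stated.

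First, I would establish that $\P_1(\Gamma)$ is metrizable in the weak-$*$ topology. Because $\Gamma$ is countable, the group algebra $\C[\Gamma]$ has a countable $\Q(i)$-linear basis indexed by $\Gamma$, so $\C[\Gamma]$ admits a countable dense subset. Completion preserves separability, so $C^*(\Gamma)$ is a separable Banach space, and therefore the closed unit ball of its continuous dual, equipped with the weak-$*$ topology, is metrizable. Being weak-$*$ closed inside this ball, $\P_1(\Gamma)$ inherits metrizability. Thus $\P_1(\Gamma)$ is a metrizable compact convex subset of the locally convex space $C^*(\Gamma)^*$ with the weak-$*$ topology, and Choquet's theorem in the form of \cite[pg.\;14]{Phelps} applies: for any $\tau\in\P_1(\Gamma)$ there exists a regular Borel probability measure $\nu_\tau$ on $\P_1(\Gamma)$, supported on $\ext[\P_1(\Gamma)]$, with the barycentric identity
\[
L(\tau)=\int_{\P_1(\Gamma)} L(\tilde\tau)\, d\nu_\tau(\tilde\tau)
\]
for every continuous affine functional $L$ on $\P_1(\Gamma)$.

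Second, I would specialize this to the evaluation functionals. For each $g\in\Gamma$, viewing $g$ as an element of $C^*(\Gamma)$, the map
\[
\mathrm{ev}_g:\P_1(\Gamma)\to\C,\qquad\mathrm{ev}_g(\tau)\stackrel{\df}{=}\tau(g),
\]
is continuous affine: linearity (and hence affineness) in $\tau$ follows from the definition, and weak-$*$ continuity is immediate because $\mathrm{ev}_g$ is the restriction to $\P_1(\Gamma)$ of the coordinate functional on $C^*(\Gamma)^*$ corresponding to $g\in C^*(\Gamma)$. Plugging $L=\mathrm{ev}_g$ into the barycentric identity above produces exactly the formula $\tau(g)=\int\tilde\tau(g)\,d\nu_\tau(\tilde\tau)$ claimed in the theorem.

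There is no substantive obstacle in this plan; the only point that requires care is metrizability, since in the non-metrizable case one would only obtain a measure on a $G_\delta$-set containing $\ext[\P_1(\Gamma)]$ (the Bishop--de Leeuw variant), whereas here we want genuine support on $\ext[\P_1(\Gamma)]$. Metrizability is guaranteed by the countability of $\Gamma$, which via the separability of $C^*(\Gamma)$ makes the dual unit ball weak-$*$ metrizable; regularity of $\nu_\tau$ is then automatic, as every Borel probability measure on a compact metric space is regular. This is all that is needed.
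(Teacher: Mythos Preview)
Your proposal is correct and follows exactly the approach the paper itself takes: the paper does not give a separate proof of this theorem (it is not marked with a $\star$), but simply notes in the preceding paragraph that countability of $\Gamma$ yields separability of $C^{*}(\Gamma)$ and hence metrizability of $\P_{1}(\Gamma)$, after which Choquet's theorem from \cite[pg.\;14]{Phelps} applies directly. Your write-up fills in the details of this reduction (metrizability via separability, then specializing the barycentric identity to the weak-$*$ continuous affine evaluation functionals $\mathrm{ev}_g$), which is precisely what the paper leaves implicit.
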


Recall we have seen as a consequence of the Krein-Milman Theorem that
$\P_{1}(\Gamma)=\overline{\hull}\ext[\P_{1}(\Gamma)]$. Note that
$\P_{1}(\Gamma)^{\Aut(\Gamma)}$ is a weak-$*$ closed subset of $\P_{1}(\Gamma)$,
since it is the intersection over $\alpha\in\Aut(\Gamma)$ and $g\in\Gamma$
of the sets of $\tau\in\P_{1}(\Gamma)$ such that 
\[
\tau(\alpha(g))-\tau(g)=0,
\]
each of which is the vanishing locus of a weak-$*$ continuous function
on $\P_{1}(\Gamma)$. Hence $\P_{1}(\Gamma)^{\Aut(\Gamma)}$ is compact,
and also convex, so the Krein-Milman Theorem gives
\[
\P_{1}(\Gamma)^{\Aut(\Gamma)}=\overline{\hull}\ext[\P_{1}(\Gamma)^{\Aut(\Gamma)}].
\]
\emph{This reduces Question \ref{que:Do--invariant-positive-functions-sep-orbits}
to the question of whether the functions in $\ext[\P_{1}(\F)^{\Aut(\F)}]$
separate $\Aut(\F)$-orbits. }It also raises the interesting question
of when our known examples of elements of $\P_{1}(\F)^{\Aut(\F)}$
are extremal.
\begin{thm}
\label{thm:extreme-functions-and-ergodic-actions}$\star$ Recall
the notations from Example \ref{exa:compact-group-construction}.
Let $\pi$ be an irreducible unitary representation of the compact
group $G$. Then the function $\tilde{\tau}_{G,\pi}=\frac{1}{\dim V}\tau_{G,\pi}$
is in $\ext[\P_{1}(\F)^{\Aut(\F)}]$ if and only if the action by
precomposition of $\Aut(\F)$ on $G^{r}\cong\Hom(\F,G)$ is ergodic
with respect to the Haar measure $\mu^{r}$. 
\end{thm}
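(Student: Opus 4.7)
The plan is to characterize extremality of $\tilde{\tau}_{G,\pi}$ via the extended Gelfand--Naimark--Segal triple $(\Pi^{+},\H^{+},\xi^{+})$ for $\tilde{\tau}_{G,\pi}^{+}$ from Example \ref{exa:compact-group-construction-III}, where $\H^{+}\subseteq\H_{0}:=L^{2}(G^{r},\End(V))$ is the cyclic subspace generated by the $\Aut(\F)$-invariant vector $\xi^{+}=(\dim V)^{-1/2}(\mathbf{g}\mapsto I_{V})$. My first step is to establish the algebraic reduction
\[
\tilde{\tau}_{G,\pi}\in\ext[\P_{1}(\F)^{\Aut(\F)}]\quad\Longleftrightarrow\quad\H^{+,\Aut(\F)}=\C\xi^{+},
\]
where $\H^{+,\Aut(\F)}$ denotes the subspace of $\Aut(\F)$-fixed vectors in $\H^{+}$. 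One direction uses the Naimark/Radon--Nikodym correspondence on $\F\rtimes\Aut(\F)$: any decomposition $\tilde{\tau}_{G,\pi}=s\tau_{1}+(1-s)\tau_{2}$ in $\P_{1}(\F)^{\Aut(\F)}$ produces a positive $T$ in the commutant of $\Pi^{+}$ with $s\tau_{1}(w)=\langle T\Pi^{+}(w,e)\xi^{+},\xi^{+}\rangle$; the $\Aut(\F)$-invariance of $\xi^{+}$ together with $T\in\Pi^{+}(\Aut(\F))'$ forces $T\xi^{+}\in\H^{+,\Aut(\F)}$, and the decomposition is trivial exactly when $T\xi^{+}$ is a scalar multiple of $\xi^{+}$. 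Conversely, every $\Aut(\F)$-invariant vector in $\H^{+}$ is realized in the closure as $T\xi^{+}$ for such a commutant $T$, obtained by compressing operators on $\H_{0}$ of the form $(\mathbf{g}\mapsto B_{\mathbf{g}})\mapsto(\mathbf{g}\mapsto f(\mathbf{g})\,B_{\mathbf{g}}\,C)$, with $f\in L^{\infty}(G^{r})^{\Aut(\F)}$ and $C\in\End(V)$, which manifestly commute with both the $\F$- and $\Aut(\F)$-actions on $\H_{0}$.

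For the ergodic direction, assume the $\Aut(\F)$-action on $(G^{r},\mu^{r})$ is ergodic. Then every $\Aut(\F)$-invariant $L^{2}$-function on $G^{r}$ is $\mu^{r}$-a.e.\ constant, and so $\H_{0}^{\Aut(\F)}$ consists precisely of constant $\End(V)$-valued functions $\mathbf{g}\mapsto C$, $C\in\End(V)$. The key input is Schur's lemma: since $\mu^{r}$ is invariant under the diagonal conjugation action of $G$ on $G^{r}$, the integral $\int_{G^{r}}\pi(w(\mathbf{g}))\, d\mu^{r}(\mathbf{g})\in\End(V)$ commutes with $\pi(h)$ for every $h\in G$ and hence, by irreducibility, equals $\tilde{\tau}_{G,\pi}(w)\cdot I_{V}$. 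Evaluating $P^{+}(\mathbf{g}\mapsto C)$ through its inner products with the generators $\Pi^{+}(w,e)\xi^{+}$ then yields $P^{+}(\mathbf{g}\mapsto C)=(\dim V)^{-1/2}\trace(C)\,\xi^{+}\in\C\xi^{+}$, so $\H^{+,\Aut(\F)}=\C\xi^{+}$ and $\tilde{\tau}_{G,\pi}$ is extreme by the first step.

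For the converse, assume the action is not ergodic, and pick a non-constant $f\in L^{2}(G^{r})^{\Aut(\F)}$ with $\int f\, d\mu^{r}=0$. The vector $\eta:=f\otimes I_{V}\in\H_{0}^{\Aut(\F)}$ is orthogonal to $\xi^{+}$, and $P^{+}(\eta)\in\H^{+,\Aut(\F)}$. By the algebraic reduction, non-extremality of $\tilde{\tau}_{G,\pi}$ is equivalent to $P^{+}(\eta)\neq 0$ for some admissible $f$ (and possibly some replacement of $I_{V}$ by a more general $C\in\End(V)$), equivalently to the existence of $w\in\F$ with
\[
\int_{G^{r}}f(\mathbf{g})\,\overline{\trace(\pi(w(\mathbf{g})))}\, d\mu^{r}(\mathbf{g})\neq 0.
\]
The main obstacle is this detection claim: one must show that the family $\{\mathbf{g}\mapsto\trace(\pi(w(\mathbf{g})))\}_{w\in\F}$ in $L^{2}(G^{r})$ is not collectively orthogonal to any nonzero mean-zero $\Aut(\F)$-invariant function. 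I plan to tackle this via the Peter--Weyl decomposition $L^{2}(G^{r})=\bigotimes_{i=1}^{r}L^{2}(G)$, tracking which isotypic components are reached by the character functions as $w$ varies and, when useful, allowing $\eta=f\otimes C$ for a more general $C\in\End(V)$ (which still lies in $\H_{0}^{\Aut(\F)}$) in order to hit additional summands. This is the step in which irreducibility and non-triviality of $\pi$ enter the argument decisively.
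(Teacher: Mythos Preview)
Your ``ergodic $\Rightarrow$ extreme'' argument is essentially the paper's: lift to the GNS triple of $\tilde\tau_{G,\pi}^{+}$ on $\F\rtimes\Aut(\F)$, observe that a commutant element applied to $\xi^{+}$ produces an $\Aut(\F)$-fixed vector, use ergodicity to reduce it to a constant $\End(V)$-valued function, and then invoke Schur's lemma (Lemma~\ref{lem:simple-lemma}) to collapse the decomposition.

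For the converse you are working harder than necessary. The paper simply picks an invariant Borel set $E\subset G^{r}$ with $0<\mu^{r}(E)<1$ and writes down, by hand, $\tilde\tau_{G,\pi}=\mu^{r}(E)\,\tau_{1}+(1-\mu^{r}(E))\,\tau_{2}$, where $\tau_{i}\in\P_{1}(\F)^{\Aut(\F)}$ come from restricting the defining integral to $E$ and to $G^{r}\setminus E$; your compressed operator $P^{+}M_{\mathbf{1}_{E},I_{V}}P^{+}$ reproduces exactly this $\tau_{1}$, so the GNS detour buys nothing here. That said, you have correctly isolated a point the paper's appendix proof glosses over: one still needs $\tau_{1}\neq\tau_{2}$, which is precisely your ``detection claim'' that $\int_{G^{r}}\bigl(\mathbf{1}_{E}-\mu^{r}(E)\bigr)\trace(\pi(w(\mathbf g)))\,d\mu^{r}(\mathbf g)\neq 0$ for some $w\in\F$. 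This is not automatic and in fact \emph{fails} when $\pi$ is the trivial representation: then $\tilde\tau_{G,\pi}=\tau_{\triv}$ is extreme regardless of ergodicity, so taking $G$ finite and nontrivial gives a counterexample to the theorem as stated. Your Peter--Weyl plan is too vague to assess, and since the span of the functions $\mathbf g\mapsto\trace(\pi(w(\mathbf g)))$ is typically far from dense in $L^{2}(G^{r})$, a density argument will not work directly; at minimum you would need to identify exactly which hypothesis on $\pi$ beyond irreducibility makes the detection claim go through.
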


Fortunately, the action of $\Aut(\F)$ on $G^{r}$ has already been
investigated by different researchers. The following theorem was proved
by Goldman \cite{GOLDMAN} when $G$ is a Lie group with simple factors
of type $\mathrm{U}(1)$ or $\mathrm{SU}(2)$, and extended by Gelander
in \cite{Gelander} to the following.
\begin{thm}[Goldman, Gelander]
\label{thm:gelander}Let $G$ be a compact connected semisimple Lie
group and suppose that $r\geq3$. Then the action of $\Aut(\F_{r})$
on $G^{r}$ is ergodic with respect to the Haar measure $\mu^{r}$.
\end{thm}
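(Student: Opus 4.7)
The strategy is to reduce ergodicity to the following $L^2$ statement: every $\Aut(\F_r)$-invariant function $f\in L^2(G^r,\mu^r)$ is essentially constant. By Nielsen's theorem, it suffices to check invariance under the generators $\alpha_\sigma,\iota,\gamma$ from the introduction. Since these preserve $\mu^r$ (Lemma \ref{lem:Haar-measure-Aut-invariant}), the action is at least measure-preserving, so the ergodic-theoretic and $L^2$ formulations match.

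I would next exploit that the Nielsen moves can be composed to produce, for any word $w\in\langle x_2,\ldots,x_r\rangle\subset\F_r$, an automorphism $\phi_w$ fixing $x_2,\ldots,x_r$ and sending $x_1\mapsto x_1 w$: conjugating $\gamma$ by $\alpha_\sigma$'s gives $x_1\mapsto x_1 x_i$, pre/post-composing with $\iota$-conjugates gives $x_1\mapsto x_1 x_i^{-1}$, and iterating yields arbitrary $w$. Under $\Hom(\F_r,G)\cong G^r$, the automorphism $\phi_w$ acts by
\[
(g_1,g_2,\ldots,g_r)\longmapsto\bigl(g_1\,w(g_2,\ldots,g_r),\,g_2,\ldots,g_r\bigr).
\]
Hence any $\Aut(\F_r)$-invariant $f$ satisfies $f(g_1 w(g_2,\ldots,g_r),g_2,\ldots,g_r)=f(g_1,\ldots,g_r)$ almost everywhere, simultaneously for all $w$ in some fixed countable enumeration of $\F_{r-1}$, by Fubini.

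The key analytic input is a density statement: since $G$ is compact, connected, semisimple, and $r-1\geq 2$, Haar-almost every $(r-1)$-tuple in $G^{r-1}$ generates a dense subgroup of $G$. I would assume or cite this (it reduces to showing that the set of tuples landing inside a proper closed subgroup is a controllable union of measure-zero sets, using the fact that compact connected semisimple Lie groups admit only countably many conjugacy classes of proper closed connected subgroups, each of positive codimension). Combining with the Fubini step, for a.e.\ $(g_2,\ldots,g_r)$ the slice $f_{(g_2,\ldots,g_r)}\in L^2(G)$ is invariant under right translation by every element of a dense subgroup of $G$. Since the right regular representation of $G$ on $L^2(G)$ is strongly continuous, the map $h\mapsto R_h f_{(g_2,\ldots,g_r)}$ is continuous and equals $f_{(g_2,\ldots,g_r)}$ on a dense set, hence everywhere. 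So $f_{(g_2,\ldots,g_r)}$ is $G$-invariant under right translation, i.e., essentially constant, and $f$ is essentially independent of $g_1$.

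Finally, I would apply the permutation Nielsen moves $\alpha_\sigma$ to repeat this argument for each coordinate, concluding that $f$ is essentially independent of every $g_i$, hence essentially constant; this gives ergodicity. The main obstacle is the density result for random tuples, which is exactly where both hypotheses are forced: semisimplicity rules out the abelian/torus case where generic products could lie in a proper torus, and the inequality $r-1\geq 2$ is needed because a single Haar-random element of a non-abelian semisimple $G$ only generates a torus, so the whole pipeline breaks down at $r=2$.
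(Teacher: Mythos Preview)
The paper does not give a proof of this theorem: it is stated with attribution to Goldman \cite{GOLDMAN} and Gelander \cite{Gelander} and is not among the starred results proved in the Appendix. So there is no ``paper's own proof'' to compare against.

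That said, your sketch is essentially the argument in Gelander's paper, and it is correct in outline. The reduction to slices via the Nielsen transvections $x_1\mapsto x_1 w$, the use of strong continuity of the regular representation to pass from invariance under a dense subgroup to invariance under all of $G$, and the final symmetrisation over coordinates are all sound. The one place where your justification is a little thin is the density input: saying there are only countably many conjugacy classes of proper closed \emph{connected} subgroups does not by itself dispose of tuples whose closure is disconnected (e.g.\ finite, or a finite extension of a torus). The full statement---that for compact connected semisimple $G$ and $k\ge 2$, Haar-almost every $k$-tuple topologically generates $G$---does require a bit more (ruling out elements of finite order, then controlling the identity component of the closure), but you already flag this as something to cite rather than prove, which is appropriate here. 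Your closing remark correctly identifies that this is exactly where both hypotheses $r\ge 3$ and semisimplicity enter.
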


Theorem \ref{thm:gelander} together with Theorem \ref{thm:extreme-functions-and-ergodic-actions}
allow us to produce many elements of $\ext[\P_{1}(\F)^{\Aut(\F)}]$
using compact groups. The situation for finite groups is less clear.
One important point is that when $G$ is a finite non-trivial group,
the action of $\Aut(\F)$ on $G^{r}$ will never be ergodic with respect
to the Haar measure $\mu^{r}$. The reason is that the subset
\[
\Epi(\F,G)=\{\,\phi\in\Hom(\F,G)\::\:\phi(\F)=G\,\}\subset\Hom(\F,G)
\]
is clearly invariant, and its complement has positive measure. Nonetheless,
one could alter the definitions of $\tilde{\tau}_{G,\pi}$ to use
the uniform measure on $\Epi(\F,G)$ in place of $\mu^{r}$. If $\Aut(\F)$
acts transitively on $\Epi(\F,G)$, this will yield elements of $\ext[\P_{1}(\F)^{\Aut(\F)}]$.
However, it is a well-known open problem whether this is the case
even for simple $G$:
\begin{conjecture}[Wiegold's conjecture]
\label{conj:Wiegold}If $G$ is a finite simple group, and $r\geq3$,
then $\Aut(\F_{r})$ acts transitively on $\Epi(\F_{r},G)$.
\end{conjecture}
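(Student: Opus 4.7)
The plan is to attack Wiegold's conjecture through a case analysis based on the Classification of Finite Simple Groups. By Nielsen's theorem (recalled in Section~\ref{sec:Introduction}), two generating $r$-tuples lie in the same $\Aut(\F_r)$-orbit if and only if they are joined by a sequence of elementary Nielsen moves; so the conjecture is equivalent to connectivity of the \emph{product-replacement graph} whose vertices are the elements of $\Epi(\F_r,G)$ and whose edges are the elementary Nielsen moves from (\ref{eq:nielsen-perm})--(\ref{eq:nielsen-mult}).

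The first step is to reduce every tuple to a canonical form. Since every non-abelian finite simple group is $2$-generated, for $r\geq 3$ one expects every $(g_1,\ldots,g_r)\in\Epi(\F_r,G)$ to be Nielsen-equivalent to $(a,b,e,\ldots,e)$ for a single fixed generating pair $(a,b)$. This would proceed via two subroutines: (i) a \emph{trimming} lemma showing that if some $(r-1)$-subtuple of $(g_1,\ldots,g_r)$ still generates $G$, then iterated application of the move $\gamma$ of (\ref{eq:nielsen-mult}) can be used to replace a redundant coordinate by $e$; and (ii) an \emph{inner-automorphism} subroutine that exploits the trailing trivial slots as ``scratch space'' to realise any inner automorphism of $G$ on the leading two coordinates. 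Together, (i) and (ii) reduce the conjecture to transitivity of a suitable $\Aut(\F_2)$-action on $\Epi(\F_2,G)$ modulo $\mathrm{Inn}(G)$, a statement one can then attack family by family.

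I would then execute the family-by-family step using the Classification. For $A_n$ one manipulates cycle structures directly, reducing every generating pair to a transposition and a long cycle. For groups of Lie type one exploits a Curtis--Tits-style presentation, whose root subgroups and Weyl generators supply a rich repertoire of local Nielsen-style moves for bringing any generating pair into a standard form built from Chevalley generators. For the sporadic groups the claim is verified by computer algebra in GAP.

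The main obstacle is the trimming step. There exist generating $r$-tuples for certain simple $G$ in which \emph{no} proper $(r-1)$-subtuple generates; for these the naive trim fails and one must argue non-locally, for example by an ``expand--trim--contract'' procedure that temporarily enlarges the tuple to length $r+1$ (here the hypothesis $r\geq 3$ is crucial, since in the enlarged tuple the chance of finding a redundant coordinate is much higher), trims there, and contracts back. Controlling such a procedure uniformly across all infinite families of groups of Lie type is precisely where every attempt on Wiegold's conjecture has so far fallen short---which is why the full conjecture remains open, even though the partial instances one would actually need for an attack on Question~\ref{que:Do--invariant-positive-functions-sep-orbits} via this route may be tractable by these methods.
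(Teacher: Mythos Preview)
The statement you are addressing is a \emph{conjecture}, not a theorem: the paper explicitly introduces it as ``a well-known open problem'' and offers no proof whatsoever. There is therefore nothing in the paper to compare your proposal against.

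Your proposal is also not a proof, and to your credit you say so in the final paragraph. What you have written is a reasonable survey of the standard heuristics around Wiegold's conjecture (product-replacement graph, Nielsen equivalence, redundancy/trimming, using the Classification), together with an honest identification of the known obstruction: generating $r$-tuples with no redundant coordinate block the naive reduction, and no uniform workaround is known. One technical point worth flagging: your ``expand--trim--contract'' manoeuvre, as stated, leaves the category you are working in --- the conjecture concerns the $\Aut(\F_r)$-action on $\Epi(\F_r,G)$ for a \emph{fixed} $r$, and passing to $(r{+}1)$-tuples is not an $\Aut(\F_r)$-move, so even if you succeed at length $r{+}1$ you have not established transitivity at length $r$. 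The known results in this direction (e.g.\ Gilman, Evans, Lubotzky's survey cited in the paper) typically prove transitivity only for $r$ at least some function of the minimal number of generators, precisely because one cannot freely borrow extra slots.

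In short: there is no gap to name because there is no proof on either side; the correct response here is simply to record that Conjecture~\ref{conj:Wiegold} is open and that the paper does not claim otherwise.
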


The reader is invited to see the article of Lubotzky \cite{Lubotzky}
for a survey of Wiegold's conjecture and related questions. We also
mention that it is proved in \cite{Hanany} that two words induce
the same measure on every finite group if and only if they induce
the same measure on every finite group via epimorphisms.

\section{A toy problem}

One of the philosophies of Voiculescu's Free Probability Theory introduced
in \cite{Voiculescu1991} is that one passes from classical probability
problems involving commuting random variables to problems involving
non-commutative random variables \cite{VDNbook,NSbook,MSbook}. In
the same spirit, we may view the setup of the current paper as arising
from a process by which one replaces
\begin{align*}
\Z^{r} & \rightsquigarrow\F\\
\Aut(\Z^{r})=\GL_{r}(\Z) & \rightsquigarrow\Aut(\F).
\end{align*}
In the setting of $\GL_{r}(\Z)$ acting on $\Z^{r}$, we understand
all the questions of this paper, and as we will see, they are connected
to classical results concerning Borel measures on tori that are instructive
to recall.

First we consider the extreme points of $\P_{1}(\Z^{r})$. If $\tau\in\ext[\P_{1}(\Z^{r})]$,
then the associated GNS triple $(\pi_{\tau},\H_{\tau},\xi_{\tau})$
has $\pi_{\tau}$ irreducible, so as $\Z^{r}$ is abelian, $\H_{\tau}$
is one-dimensional, and $\langle\pi_{\tau}(\underline{x})\xi_{\tau},\xi_{\tau}\rangle=\exp(2\pi i\theta^{\tau}.\underline{x})$
for some 
\begin{align*}
\theta^{\tau} & =(\theta_{1}^{\tau},\ldots,\theta_{r}^{\tau})\in[0,1)^{r},
\end{align*}
where $t_{\tau}.\underline{x}$ is the standard scalar (dot) product.
Hence the correspondence $\tau\mapsto\theta^{\tau}$ identifies $\ext[\P_{1}(\Z^{r})]$
with the torus $\T^{r}=(S^{1})^{r}$. The weak-$*$ topology on $\ext[\P_{1}(\Z^{r})]$
corresponds to the standard metric topology on $\T^{r}$.

By Choquet's Theorem (Theorem \ref{thm:choquet}) in this context,
there is a regular Borel measure $\nu_{\tau}$ on $\ext[\P_{1}(\Z^{r})]=\T^{r}$
such that for any $\underline{x}\in\Z^{r}$

\[
\tau(\underline{x})=\int_{\ext[\P_{1}(\Z^{r})]}\tilde{\tau}(\underline{x})d\nu_{\tau}(\tilde{\tau})=\int_{\T^{r}}\exp(2\pi i\theta.\underline{x})d\nu_{\tau}(\theta).
\]
In other words, $\tau(\underline{x})$ is simply the Fourier transform
of $\nu_{\tau}$ evaluated at $\underline{x}$. 

In this case, as $\Z^{r}$ is abelian, it is a consequence of the
Stone-Weierstrass Theorem that $\nu_{\tau}$ is uniquely determined
by $\tau$. It now follows that if $\tau$ is $\GL_{r}(\Z)$-invariant,
so too is $\nu_{\tau}$. This reduces the classification of $\GL_{r}(\Z)$-invariant
positive definite functions on $\Z^{r}$ to the classification of
$\GL_{r}(\Z)$-invariant Borel probability measures on $\T^{r}$.
Moreover, the extreme points $\ext[\P_{1}(\Z^{r})^{\GL_{r}(\Z)}]$
correspond to extremal invariant measures, which by standard facts
\cite[Prop 12.4]{Phelps} are the ergodic ones. One has the following
classification of such measures by Burger \cite[Prop. 9]{BURGER}\footnote{Although \cite[Prop. 9]{BURGER} states the result for $\mathrm{SL}_{r}(\Z)$,
it also holds for $\GL_{r}(\Z)$.}.
\begin{prop}
\label{prop:classification-of-ergodic-Borel-meuase}Any $\GL_{r}(\Z)$-invariant
ergodic Borel probability measure on $\T^{d}$ is either Lebesgue
measure, or atomic and supported on a finite $\GL_{r}(\Z)$-orbit.
\end{prop}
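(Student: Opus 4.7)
The plan is to combine Fourier analysis on $\T^{r}$ with an ergodic dichotomy driven by the $\GL_{r}(\Z)$-invariant torsion subgroup $T = (\Q/\Z)^{r}$. On the Fourier side, the transform $\hat{\nu}: \Z^{r}\to\C$ defined by $\hat{\nu}(v) = \int_{\T^{r}} e^{2\pi i v\cdot\theta}\,d\nu(\theta)$ is $\GL_{r}(\Z)$-invariant because $\nu$ is, and hence is constant on $\GL_{r}(\Z)$-orbits of $\Z^{r}$. A classical computation shows these orbits are $\{0\}$ together with the sets $C_{n} = \{v\in\Z^{r} : \gcd(v) = n\}$ for $n\geq 1$, using that any primitive integer vector can be completed to a $\Z$-basis of $\Z^{r}$. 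Thus $\hat{\nu}(v) = c_{\gcd(v)}$ for constants $c_{0}=1,c_{1},c_{2},\ldots$, and by the Stone--Weierstrass argument recalled immediately before the proposition, $\nu$ is determined by these coefficients, so the proposition reduces to pinning down the $c_{n}$.

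Since $T$ is a $\GL_{r}(\Z)$-invariant Borel set, ergodicity forces $\nu(T)\in\{0,1\}$. In the atomic case $\nu(T)=1$, I would decompose $T$ into its (countably many) $\GL_{r}(\Z)$-orbits: any torsion point of order dividing $N$ lies in the finite $\GL_{r}(\Z)$-invariant set $\frac{1}{N}\Z^{r}/\Z^{r}$, so each orbit is finite. Ergodicity then concentrates $\nu$ on a single such orbit, and an ergodic invariant probability measure on a finite set is necessarily the uniform measure, yielding the second alternative in the proposition.

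The main obstacle is the diffuse case $\nu(T)=0$, where one must establish $c_{n}=0$ for every $n\geq 1$, so that $\nu$ is Lebesgue. For $r\geq 2$ (the case $r=1$ being degenerate since $\GL_{1}(\Z)=\{\pm 1\}$), I would select a hyperbolic element $A\in\GL_{r}(\Z)$ whose eigenvalues avoid the unit circle, and use the identity
\[
c_{n} \;=\; \hat{\nu}(n e_{1}) \;=\; \hat{\nu}\bigl((A^{T})^{k}\, n e_{1}\bigr) \;=\; \int_{\T^{r}} e^{2\pi i n\, e_{1}\cdot A^{k}\theta}\,d\nu(\theta),
\]
valid for every $k\geq 0$, and then exploit the hyperbolic dynamics of $A$ to drive the right-hand side to zero as $k\to\infty$. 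The natural tool is the mean ergodic theorem applied to the unitary operator $U_{A}: f\mapsto f\circ A$ on $L^{2}(\T^{r},\nu)$: the Ces\`aro averages of $U_{A}^{k}\chi_{ne_{1}}$ converge to the projection of $\chi_{ne_{1}}$ onto the $U_{A}$-invariant subspace.

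The crux of the argument --- and the step I expect to be the main obstacle --- is proving that the only $U_{A}$-invariants in $L^{2}(\T^{r},\nu)$ are the constants when $\nu(T)=0$. The $A$-periodic points on $\T^{r}$ are exactly the torsion points (a standard computation using that no eigenvalue of $A$ is a root of unity), so $\nu$ gives them zero mass, but this alone does not immediately kill $U_{A}$-invariant $L^{2}$-functions; one could in principle have exotic invariants supported on $A$-invariant positive-measure sets that are not $\GL_{r}(\Z)$-invariant. To rule these out one must leverage the full $\GL_{r}(\Z)$-invariance of $\nu$ --- essentially using that conjugating $A$ by other elements of $\GL_{r}(\Z)$ supplies a rich family of commuting hyperbolic automorphisms and that their joint rigidity eliminates any nontrivial invariant beyond the diagonal constant. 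This is the heart of Burger's argument, and once the triviality of $U_{A}$-invariants is in hand, the displayed formula forces $c_{n}=0$.
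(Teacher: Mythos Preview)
The paper does not prove this proposition; it is quoted from Burger \cite[Prop.~9]{BURGER} and is not among the $\star$-marked statements with proofs in the appendix, so there is no in-paper argument to compare against. That said, your proposal has a genuine gap in the diffuse case. The dichotomy on $\nu(T)$ and the treatment of $\nu(T)=1$ are correct, but the final inference---that triviality of $U_A$-invariants in $L^{2}(\T^{r},\nu)$ combined with the displayed identity ``forces $c_{n}=0$''---does not hold. If the only $U_A$-invariant functions are constants, the mean ergodic theorem yields $\frac{1}{K}\sum_{k<K}U_A^{k}\chi_{ne_{1}}\to\langle\chi_{ne_{1}},1\rangle_{L^{2}(\nu)}=c_{n}$ in $L^{2}(\nu)$, and integrating against $d\nu$ returns only the tautology $c_{n}=c_{n}$. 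Indeed your own identity already says $\int U_A^{k}\chi_{ne_{1}}\,d\nu=c_{n}$ for \emph{every} $k$, so no limiting procedure in $k$ can manufacture a zero. Ergodicity of a single $A$ is strictly weaker than mixing, and without absolute continuity of $\nu$ there is no Riemann--Lebesgue decay to invoke as $(A^{T})^{k}ne_{1}\to\infty$. (The aside that conjugates of $A$ furnish a family of \emph{commuting} hyperbolic automorphisms is also not correct in general.)

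A clean replacement for the diffuse step, bypassing the dynamics of a single element, is Wiener's lemma: $\lim_{N}(2N+1)^{-r}\sum_{\|v\|_{\infty}\le N}|\hat\nu(v)|^{2}=\sum_{\theta\in\T^{r}}\nu(\{\theta\})^{2}$. When $\nu(T)=0$ the measure has no atoms (any atom would have a finite $\GL_{r}(\Z)$-orbit and hence lie in $T$), so the right side vanishes; since $|\hat\nu(v)|=|c_{\gcd(v)}|$ and for $r\ge2$ each level set $\{v:\gcd(v)=n\}$ has positive natural density in $\Z^{r}$, every $c_{n}$ with $n\ge1$ must be zero, whence $\nu$ is Lebesgue.
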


This can be read as a full classification of $\ext[\P_{1}(\Z^{r})^{\GL_{r}(\Z)}]$.
While an analogous classification of $\ext[\P_{1}(\F)^{\Aut(\F)}]$
seems out of reach, it suggests that it would be interesting to pursue
(see $\S$\ref{sec:Further-Open-Questions}). Even further, we can
show the following.
\begin{thm}
\label{thm:hierarchy-collapse}$\star$ For $\Z^{r}$, $\GL_{r}(\Z)$,
in place of $\F$, $\Aut(\F)$, the hierarchy in (\ref{eq:relations-impliications})
completely collapses. More concretely, for $\underline{x}=(x_{1},\ldots,x_{r})$,
$\underline{y}=(y_{1},\ldots,y_{r})\in\Z^{r}$, $\underline{x}\in\GL_{r}(\Z).\underline{y}$
if and only if there is a finite abelian group $G$ with uniform measure
$\mu$ such that $\mu_{\underline{x}}=\mu_{\underline{y}}$, where
e.g. $\mu_{\underline{x}}=\underline{x}_{*}\mu^{r}$ is the pushforward
of $\mu^{r}$ on $G^{r}$ under the map 
\[
\underline{x}:(g_{1},\ldots,g_{r})\mapsto x_{1}g_{1}+\cdots+x_{r}g_{r}.
\]
\end{thm}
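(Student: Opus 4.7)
The plan is to reduce both directions to the classical classification of $\GL_r(\Z)$-orbits on $\Z^r$ by the greatest common divisor. Given $\underline{x}=(x_1,\ldots,x_r)\in\Z^r$, set $d(\underline{x})=\gcd(x_1,\ldots,x_r)\geq 0$ (with $d(\underline{0})=0$). Applying Smith normal form to the row vector $\underline{x}^{T}$ (equivalently, iterating Bezout's identity together with coordinate permutations), any $\underline{x}$ is $\GL_r(\Z)$-equivalent to $(d(\underline{x}),0,\ldots,0)^{T}$. Hence $\underline{x}\in\GL_r(\Z).\underline{y}$ if and only if $d(\underline{x})=d(\underline{y})$, and the task reduces to showing that the gcd is recorded by a single pushforward measure on a suitable finite abelian group.

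For the easy direction, suppose $\underline{y}=A\underline{x}$ with $A\in\GL_r(\Z)$. For any abelian $G$, the identity $\underline{y}\cdot\underline{g}=\underline{x}\cdot(A^{T}\underline{g})$ together with the fact that $\underline{g}\mapsto A^{T}\underline{g}$ is a measure-preserving bijection of $G^r$ yields $\mu_{\underline{x}}=\mu_{\underline{y}}$. For the nontrivial direction, I would take $G=\Z/N\Z$ cyclic. Since $G$ is abelian, the evaluation map $\underline{x}:G^r\to G$ is a group homomorphism whose image is the cyclic subgroup $\langle d(\underline{x})\bmod N\rangle$. The pushforward of Haar measure under a surjective homomorphism of finite abelian groups is Haar on the image (all fibers are cosets of the kernel, hence have equal size), so $\mu_{\underline{x}}$ is the uniform probability measure on this subgroup, which has order $N/\gcd(d(\underline{x}),N)$. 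When both $d(\underline{x})$ and $d(\underline{y})$ are positive, choose $N$ to be any common multiple of them (e.g.\ $N=d(\underline{x})d(\underline{y})$); then the two measures are uniform on subgroups of $\Z/N\Z$ of orders $N/d(\underline{x})$ and $N/d(\underline{y})$ respectively, and measure equality forces the subgroups, hence the gcds, to coincide. The degenerate case $d(\underline{x})=0$ is immediate: then $\underline{x}=\underline{0}$ and $\mu_{\underline{x}}=\delta_0$ on any nontrivial $G$, forcing $\underline{y}$ to be zero as well.

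The only genuinely delicate point is the calibration of $N$: it must be a common multiple of both gcds so that the relevant cyclic subgroups have the ``expected'' orders $N/d(\underline{x})$ and $N/d(\underline{y})$ (otherwise the map $d\mapsto\gcd(d,N)$ can collapse distinct gcds to the same value). Once this is arranged, the collapse of the hierarchy reduces to the tautology that two uniform probability measures on subgroups of a common ambient group are equal exactly when the subgroups are. This yields the stronger statement implicit in the theorem: a single cyclic group, whose order depends only on $\underline{x}$ and $\underline{y}$, already detects $\GL_r(\Z)$-orbit equivalence.
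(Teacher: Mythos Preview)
Your proof is correct and follows essentially the same approach as the paper: both reduce to the classification of $\GL_r(\Z)$-orbits by $\gcd$ and then distinguish unequal gcds via the pushforward measure on a single cyclic group. The only cosmetic difference is the choice of modulus: the paper picks a prime power $p^f$ dividing one gcd but not the other (so one measure is the point mass $\delta_0$ and the other is not), whereas you take $N$ a common multiple of both gcds and compare the orders of the image subgroups; your handling of the degenerate case $d(\underline{x})=0$ is fine once you note that a single $N$ not divisible by $d(\underline{y})$ (rather than ``any nontrivial $G$'') suffices.
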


\section{Further open questions\label{sec:Further-Open-Questions}}

Our discussion above leads to a possible alternative approach to Conjectures
\ref{que:compact-groups-separate-orbits} and \ref{que:finite-groups-separate-orbits}.
This consists of the following program:
\begin{description}
\item [{I}] Resolve Question \ref{que:Do--invariant-positive-functions-sep-orbits},
i.e. show that the elements of $\ext[\P_{1}(\F)^{\Aut(\F)}]$ separate
$\Aut(\F)$-orbits.
\item [{II}] Prove that the elements of $\ext[\P_{1}(\F)^{\Aut(\F)}]$
can be approximated in a suitable way by elements arising from finite
or compact groups via the construction given in Example \ref{exa:compact-group-construction}.
\end{description}
Whether or not step II above can be accomplished is of independent
interest. The following question is enticing:

\begin{question}
\label{que:classification-of-extremal-measures}Is it possible to
classify the elements of $\ext[\P_{1}(\F)^{\Aut(\F)}]$ in a way that
generalizes Proposition \ref{prop:classification-of-ergodic-Borel-meuase}?
\end{question}

As mentioned above, Question \ref{que:classification-of-extremal-measures}
may be very hard or impossible. It would be nice to reduce Question
\ref{que:classification-of-extremal-measures} to a question about
the classification of $\Aut(\F)$-invariant ergodic measures as in
Proposition \ref{prop:classification-of-ergodic-Borel-meuase}. The
problem with this is that the measure on $\ext[\P_{1}(\F)]$ that
represents an element of $\ext[\P_{1}(\F)^{\Aut(\F)}]$, given by
Theorem \ref{thm:choquet}, may not be unique; however we do not know
whether this is the case in practice. Therefore one has the technical
question:

\begin{question}
\label{que:uniqueness of Choquet measure}Is there some $\tau\in\ext[\P_{1}(\F)^{\Aut(\F)}]$
that is not represented by a unique regular Borel probability measure
$\nu_{\tau}$ supported on $\ext[\P_{1}(\F)]$?
\end{question}

Setting aside the technical issue presented in Question \ref{que:uniqueness of Choquet measure},
one can still ask about the classification of $\Aut(\F)$-invariant
ergodic measures.
\begin{question}
\label{que:Classify-the-Borel-measures-on-extreme-points}Classify
the Borel probability measures supported on $\ext[\P_{1}(\F)]$ that
are invariant and ergodic for the action of $\Aut(\F)$.
\end{question}

One specific instance of Question \ref{que:Classify-the-Borel-measures-on-extreme-points}
that is much more approachable is the following.
\begin{question}
\label{que:classification-of-invariant-borel-measures}Let $G$ be
a compact topological group. For simplicity, one might like to assume
that $G$ is a connected compact semisimple Lie group. What are the
$\Aut(\F)$-invariant and ergodic Borel measures on $G^{r}$?
\end{question}

Note that Theorem \ref{thm:gelander} classifies, under certain hypotheses,
the $\Aut(\F)$-invariant and ergodic Borel measures on $G^{r}$ that
are absolutely continuous with respect to the Haar measure, and Question
\ref{que:classification-of-invariant-borel-measures} removes this
assumption.

Short of classification results, one may hope for other statements
that would accomplish step II above. For example,

\begin{question}
\label{que:approximation-by-compact-groups}Is it possible that the
weak-$*$ closure of the functions $\tilde{\tau}_{G,\pi}$ (cf. Examples
\ref{exa:compact-group-construction}, \ref{exa:compact-group-construction II},
\ref{exa:compact-group-construction-III}) contains $\ext[\P_{1}(\F)^{\Aut(\F)}]$?
\end{question}

Again, Question \ref{que:approximation-by-compact-groups} may be
very difficult. However, considering Question \ref{que:approximation-by-compact-groups}
leads us to realize that we do not even know very basic things about
$\ext[\P_{1}(\F)^{\Aut(\F)}]$. Note that by (\ref{eq:schur-orth}),
all the examples of elements $\tau\in\ext[\P_{1}(\F)^{\Aut(\F)}]$
given in this paper, other than $\tau_{\triv}$, have the property
that $\tau(x_{1})=0$. This invites the following basic and intriguing
question.
\begin{question}
Is there a $\tau\in\ext[\P_{1}(\F)^{\Aut(\F)}]$ with $\tau\neq\tau_{\triv}$
such that $\tau(x_{1})\neq0$?
\end{question}

Also with Question \ref{que:approximation-by-compact-groups} in mind,
if $\tau$ is a weak-$*$ limit of functions $\tilde{\tau}_{G_{i},\pi_{i}}$
with $\dim(\pi_{i})\to\infty$ as $i\to\infty$, then by Theorem \ref{thm:frob},
$\tau([x_{1},x_{2}])=0$. This suggests that it might be helpful to
ask the converse.

\begin{question}
If $r\geq2$ and $\tau\in\ext[\P_{1}(\F)^{\Aut(\F)}]$ with $\tau([x_{1},x_{2}])=0,$
is $\tau$ a weak-$*$ limit of the functions $\tilde{\tau}_{G,\pi}$?
\end{question}

Finally, turning to Question \ref{que:Do--invariant-positive-functions-sep-orbits}
in view of step I above, we propose the following.
\begin{question}
Find new constructions of $\Aut(\F)$-invariant positive definite
functions on $\F$.
\end{question}

\appendix

\section{Proofs of background results}

In some of our proofs we use the following simple fact.
\begin{lem}
\label{lem:simple-lemma}If $G$ is a compact topological group with
probability Haar measure $\mu$, $(\pi,V)$ is an irreducible unitary
representation of $G$, and $A\in\End(V)$, then
\[
\int_{G}\pi(g)A\pi(g)^{-1}d\mu(g)=\frac{\trace(A)}{\dim V}\mathrm{Id}_{V}.
\]
\end{lem}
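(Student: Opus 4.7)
The plan is to use Schur's lemma combined with the translation-invariance of Haar measure, which is the standard recipe for identifying $G$-equivariant averages of endomorphisms.

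First I would set $T \stackrel{\df}{=} \int_{G} \pi(g) A \pi(g)^{-1} d\mu(g) \in \End(V)$, which makes sense because $G$ is compact (so the integral converges) and the integrand is continuous in $g$. The first substantive step is to verify that $T$ is an intertwiner, i.e.~that $\pi(h) T = T \pi(h)$ for every $h \in G$. This follows from the left-invariance of Haar measure: conjugating the integrand by $\pi(h)$ and applying the change of variables $g \mapsto hg$ returns the same integral, so $\pi(h) T \pi(h)^{-1} = T$.

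Next, since $(\pi, V)$ is irreducible and $V$ is finite dimensional (implicit because a trace appears), Schur's lemma says that any element of $\End(V)$ commuting with all $\pi(h)$ is a scalar multiple of the identity. Hence there exists $c \in \C$ with $T = c \cdot \mathrm{Id}_{V}$.

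Finally, I would pin down $c$ by taking traces. Using the cyclicity of trace and $\mu(G) = 1$,
\[
\trace(T) = \int_{G} \trace\bigl(\pi(g) A \pi(g)^{-1}\bigr) d\mu(g) = \int_{G} \trace(A) d\mu(g) = \trace(A),
\]
while $\trace(c \cdot \mathrm{Id}_{V}) = c \dim V$. Comparing yields $c = \trace(A)/\dim V$, which gives the claimed formula. There is no real obstacle here; the only thing requiring a modicum of care is justifying the interchange of trace with the Haar integral, which is automatic because $\trace$ is a continuous linear functional on $\End(V)$ and we are integrating a continuous bounded function on a compact space.
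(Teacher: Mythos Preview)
Your proof is correct and follows essentially the same approach as the paper's: show the integral is conjugation-invariant (hence a scalar by Schur's lemma), then compute the scalar by taking traces. Your write-up is more detailed about the change of variables and the interchange of trace with the integral, but the underlying argument is identical.
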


\begin{proof}
The left hand side is invariant under conjugation by elements $\pi(g)$
with $g\in G$, so by Schur's Lemma is a scalar multiple of the identity.
The trace of the matrices inside the integral is constant and equal
to $\mathrm{tr}(A)$, and so the result of the integral is a scalar
multiple of the identity with trace $\mathrm{tr}(A).$
\end{proof}
\begin{proof}[\textbf{Proof of Lemma \ref{lem:Haar-measure-Aut-invariant}}]
It is enough to show that $\mu^{r}$ is invariant under the Nielsen
generators given in (\ref{eq:nielsen-perm}), (\ref{eq:nielsen-inverse}),
(\ref{eq:nielsen-mult}). The measure $\mu^{r}$ is determined by
the formula, for any continuous $f:G^{r}\to\C$,
\[
\int_{G^{r}}f(\mathbf{g})d\mu^{r}(\mathbf{g})=\int_{G}\ldots\int_{G}f(g_{1},\ldots,g_{r})d\mu(g_{1})\ldots d\mu(g_{r}).
\]
For $\sigma\in S_{r}$ we have 
\begin{align*}
\int_{G^{r}}f(\alpha_{\sigma}(\mathbf{g}))d\mu^{r}(\mathbf{g}) & =\int_{G}\ldots\int_{G}f(g_{\sigma(1)},\ldots,g_{\sigma(r)})d\mu(g_{1})\ldots d\mu(g_{r})\\
 & =\int_{G}\ldots\int_{G}f(g_{1},\ldots,g_{r})d\mu(g_{1})\ldots d\mu(g_{r})\\
 & =\int_{G^{r}}f(\mathbf{g})d\mu^{r}(\mathbf{g})
\end{align*}
by Fubini's Theorem. We have
\begin{align*}
\int_{G^{r}}f(\iota(\mathbf{g}))d\mu^{r}(\mathbf{g}) & =\int_{G}\ldots\int_{G}f(g_{1}^{-1},\ldots,g_{r})d\mu(g_{1})\ldots d\mu(g_{r})\\
 & =\int_{G}\ldots\int_{G}f(g_{1},\ldots,g_{r})d\mu(g_{1})\ldots d\mu(g_{r})\\
 & =\int_{G^{r}}f(\mathbf{g})d\mu^{r}(\mathbf{g})
\end{align*}
since $\mu$ is invariant under pushforward by $g\mapsto g^{-1}$
(that is a result of the bi-invariance and uniqueness of Haar measure).
Finally, we have
\begin{align*}
\int_{G^{r}}f(\gamma(\mathbf{g}))d\mu^{r} & =\int_{G}\ldots\int_{G}f(g_{1}g_{2},\ldots,g_{r})d\mu(g_{1})\ldots d\mu(g_{r})\\
 & =\int_{G}\ldots\int_{G}f(g_{1},\ldots,g_{r})d\mu(g_{1})\ldots d\mu(g_{r})\\
 & =\int_{G^{r}}f(\mathbf{g})d\mu^{r}(\mathbf{g})
\end{align*}
by the right-invariance of Haar measure.
\end{proof}
\begin{proof}[\textbf{Proof of Lemma \ref{lem:W-MEASURE-determined-by-character-integrals}}]
Let $C(G)$ denote the Banach space of continuous complex valued
functions on $G$ with supremum norm. Since $G^{r}$ and $G$ are
compact and Hausdorff, and $w:G^{r}\to G$ is continuous, $\mu^{r}$
is a regular Borel probability measure, and so too is the pushforward
measure $\mu_{w}=w_{*}\mu^{r}$. Hence by the Riesz-Markov Theorem
$\mu_{w}$ is uniquely determined by the formula
\[
\int_{g\in G}f(g)d\mu_{w}(g)=\int_{(g_{1},\ldots,g_{r})\in G^{r}}f(w(g_{1},\ldots,g_{r}))d\mu^{r}(g_{1},\ldots,g_{r}),\quad\forall f\in C(G).
\]
Since the linear span of matrix coefficients of irreducible unitary
representations is dense in $C(G)$ by the Peter-Weyl Theorem, it
follows that $\mu_{w}$ is determined by the integrals 
\[
\int_{(g_{1},\ldots,g_{r})\in G^{r}}\langle\pi(w(g_{1},\ldots,g_{r}))v_{1},v_{2}\rangle d\mu^{r}(g_{1},\ldots,g_{r}).
\]
where $\pi:G\to U(V)$ is an irreducible unitary representation of
$G$ and $v_{1},v_{2}\in V$. On the other hand, we have
\begin{align*}
 & \int_{(g_{1},\ldots,g_{r})\in G^{r}}\left\langle \pi(w(g_{1},\ldots,g_{r}))v_{1},v_{2}\right\rangle d\mu^{r}(g_{1},\ldots,g_{r})\\
 & =\int_{h\in G}\int_{(g_{1},\ldots,g_{r})\in G^{r}}\langle\pi(w(hg_{1}h^{-1},\ldots,hg_{r}h^{-1}))v_{1},v_{2}\rangle d\mu^{r}(g_{1},\ldots,g_{r})d\mu(h)\\
 & =\int_{h\in G}\int_{(g_{1},\ldots,g_{r})\in G^{r}}\langle\pi(h)\pi(w(g_{1},\ldots,g_{r}))\pi(h)^{-1}v_{1},v_{2}\rangle d\mu^{r}(g_{1},\ldots,g_{r})d\mu(h)\\
 & =\int_{(g_{1},\ldots,g_{r})\in G^{r}}\langle\left(\int_{h\in G}\pi(h)\pi(w(g_{1},\ldots,g_{r}))\pi(h)^{-1}d\mu(h)\right)v_{1},v_{2}\rangle d\mu^{r}(g_{1},\ldots,g_{r})\\
 & =\frac{\langle v_{1},v_{2}\rangle}{\dim V}\int_{(g_{1},\ldots,g_{r})\in G^{r}}\trace(\pi(w(g_{1},\ldots,g_{r})))d\mu^{r}(g_{1},\ldots,g_{r})\\
 & =\frac{\langle v_{1},v_{2}\rangle}{\dim V}\tau_{G,\pi}(w),
\end{align*}
where the third equality used Fubini's Theorem and the fourth equality
used Lemma \ref{lem:simple-lemma}. This shows that $\mu_{w}$ is
determined by the values $\tau_{G,\pi}(w)$ with $\pi$ irreducible.
\end{proof}
\begin{proof}[\textbf{Proof of Lemma \ref{lem:convolution-of-words}}]
Suppose for simplicity that $w_{1}$ is generated by $x_{1},\ldots,x_{s}$
and $w_{2}$ is generated by $x_{s+1},\ldots,x_{r}$. Let $(w_{1},w_{2})$
be the map that takes $G^{r}\to G\times G$, $(w_{1},w_{2})(g_{1},\ldots,g_{r})=(w_{1}(g_{1},\ldots,g_{s}),w_{2}(g_{s+1},\ldots g_{r})).$
Let $\nu$ be the pushforward of $\mu^{r}$ under $(w_{1},w_{2})$.
By Fubini's Theorem, the pushforward of a product measure under a
product of two continuous maps is the product of the pushforward measures
of the two maps. Since $\mu^{r}$ is the product measure of $\mu^{s}$
and $\mu^{r-s}$ on $G^{r}=G^{s}\times G^{r-s}$, we obtain $\nu=\mu_{w_{1}}\times\mu_{w_{2}}$.
Furthermore, the word map $w$ is obtained by the composition
\[
G^{r}\xrightarrow{(w_{1},w_{2})}G\times G\xrightarrow{\mathrm{mult}}G
\]
where $\mathrm{mult(g_{1},g_{2})=g_{1}g_{2}}.$ This shows that $\mu_{w}=\mathrm{mult_{*}}[\nu]=\mathrm{mult_{*}}[\mu_{w_{1}}\times\mu_{w_{2}}]=\mu_{w_{1}}*\mu_{w_{2}}$.

If $\mu_{1}$ and $\mu_{2}$ are two conjugation invariant measures
on $G$ and $(\pi,V)$ is an irreducible representation of $G$ then
\begin{align*}
\mu_{1}*\mu_{2}[\trace(\pi)] & =\int_{g_{2}\in G}\int_{g_{1}\in G}\trace(\pi(g_{1}g_{2}))d\mu_{1}(g_{1})d\mu_{2}(g_{2})\\
 & =\int_{h\in G}\int_{g_{2}\in G}\int_{g_{1}\in G}\trace(\pi(hg_{1}h^{-1})\pi(g_{2}))d\mu_{1}(g_{1})d\mu_{2}(g_{2})d\mu(h)\\
 & =\int_{g_{2}\in G}\int_{g_{1}\in G}\trace\left(\left(\int_{h\in G}\pi(h)\pi(g_{1})\pi(h)^{-1}d\mu(h)\right)\pi(g_{2})\right)d\mu_{1}(g_{1})d\mu_{2}(g_{2})\\
 & =\frac{1}{\dim V}\int_{g_{2}\in G}\int_{g_{1}\in G}\trace(\pi(g_{1}))\trace(\pi(g_{2}))d\mu_{1}(g_{1})d\mu_{2}(g_{2})\\
 & =\frac{1}{\dim V}\mu_{1}[\trace(\pi)]\mu_{2}[\trace(\pi)],
\end{align*}
where the second last equality used Lemma \ref{lem:simple-lemma}.
Here we use the notation $\mu[f]$ for the integral of a function
$f$ with respect to a measure $\mu$. The stated formula for $\tau_{G,\pi}(w)$
now follows from $\mu_{w}=\mu_{w_{1}}*\mu_{w_{2}}$ and the fact that
$\mu_{w_{1}}$ and $\mu_{w_{2}}$ are conjugation invariant.
\end{proof}
\begin{proof}[\textbf{Proof of Proposition \ref{prop:char-subgroup-orbits}}]
Let $\Lambda_{1}$ and $\Lambda_{2}$ be the characteristic subgroups
of $\F$ generated by $\Aut(\F).w_{1}$ and $\Aut(\F).w_{2}$ respectively.
Suppose $\Lambda_{1}\neq\Lambda_{2}$. Then at most one of the intersections
\[
\Aut(\F).w_{2}\cap\Lambda_{1},\quad\Aut(\F).w_{1}\cap\Lambda_{2}
\]
is non-empty. Indeed if $\Aut(\F).w_{i}\cap\Lambda_{j}\neq\emptyset$
for $i\neq j$ then since $\Lambda_{j}$ is characteristic, this implies
$\Aut(\F).w_{i}\subset\Lambda_{j}$ and so $\Lambda_{i}\subset\Lambda_{j}$.
So suppose without loss of generality that $\Aut(\F).w_{2}\cap\Lambda_{1}=\emptyset$.
Then (recalling the notation from Example \ref{exa:char-subgroup})
$\tau_{\Lambda_{1}}(w_{2})=0$ but $\tau_{\Lambda_{1}}(w_{1})=1$
showing $w_{1}\stackrel{\mathbf{PosDef}}{\not\sim}w_{2}$.
\end{proof}

\begin{proof}[\textbf{Proof of Lemma \ref{lem:pos-def-on-semidirect-product}}]
Consider a finite sequence of elements $\{(\gamma_{i},\alpha_{i})\}_{i=1}^{N}\subset\Gamma\rtimes\Aut(\Gamma)$.
We need to prove that the matrix $A$ with 
\begin{align*}
A_{ij} & \stackrel{\df}{=}\tau^{+}((\gamma_{i},\alpha_{i})(\gamma_{j},\alpha_{j})^{-1})
\end{align*}
 is positive semidefinite. To this end,
\begin{align*}
A_{ij} & =\tau^{+}((\gamma_{i},\alpha_{i})(\gamma_{j},\alpha_{j})^{-1})\\
 & =\tau^{+}((\gamma_{i},\alpha_{i})(\alpha_{j}^{-1}(\gamma_{j}^{-1}),\alpha_{j}^{-1}))\\
 & =\tau^{+}((\gamma_{i}[\alpha_{i}\alpha_{j}^{-1}](\gamma_{j}^{-1}),\alpha_{i}\alpha_{j}^{-1}))\\
 & =\tau(\gamma_{i}[\alpha_{i}\alpha_{j}^{-1}](\gamma_{j}^{-1}))\\
 & =\tau(\alpha_{i}^{-1}(\gamma_{i})\alpha_{j}^{-1}(\gamma_{j}^{-1}))=\tau(\alpha_{i}^{-1}(\gamma_{i})\alpha_{j}^{-1}(\gamma_{j})^{-1}).
\end{align*}
In other words, $A_{ij}$ is the matrix associated to $\tau$ and
the sequence $\{\alpha_{i}^{-1}(\gamma_{i})\}_{i=1}^{N}$ and so is
positive semidefinite, since $\tau$ is positive definite.
\end{proof}
\begin{proof}[\textbf{Proof of Theorem \ref{thm:extreme-functions-and-ergodic-actions}}]
We use the notation from Example \ref{exa:compact-group-construction-III}.
Suppose first that the action of $\Aut(\F)$ on $G^{r}\cong\Hom(\F,G)$
is not ergodic, so that there exists a Borel set $E\subset G^{r}$
such that $\alpha(E)=E$ for all $\alpha\in\Aut(\F)$ and $0<\mu(E)<1$.
Then letting
\begin{align*}
\tau_{1}(w) & =\frac{1}{\mu(E)\dim V}\int_{{\bf g}\in G^{r}}\trace(\pi(w({\bf g})))\mathbf{1}_{E}(\mathbf{g})d\mu^{r}({\bf g}),\\
\tau_{2}(w) & =\frac{1}{(1-\mu(E))\dim V}\int_{{\bf g}\in G^{r}}\trace(\pi(w({\bf g})))(1-\mathbf{1}_{E}(\mathbf{g}))d\mu^{r}({\bf g}),
\end{align*}
we have that $\tau_{1}$ and $\tau_{2}$ are in $\P_{1}(\F)^{\Aut(\F)}$,
as the measure $\mathbf{1}_{E}(\mathbf{g})d\mu^{r}({\bf g})$ is $\Aut(\F)$-invariant.
On the other hand
\[
\tilde{\tau}_{G,\pi}=\frac{\mu(E)}{2}\tau_{1}+\frac{1-\mu(E)}{2}\tau_{2},
\]
so in this case, $\tilde{\tau}_{G,\pi}$ is not extremal in $\P_{1}(\F)^{\Aut(\F)}$.

Now, for the other direction, suppose that $\pi$ is irreducible and
that the action of $\Aut(\F)$ on $G^{r}$ is ergodic, but for the
sake of a contradiction, suppose that $\tilde{\tau}_{G,\pi}=t\tau_{1}+(1-t)\tau_{2}$
with $t\in(0,1)$ and $\tau_{1},\tau_{2}\in\P_{1}(\F)^{\Aut(\F)}$,
with $\tau_{1}$ not a positive multiple of $\tau_{G,\pi}$. Under
our assumptions we have
\[
\tilde{\tau}_{G,\pi}^{+}=t\tau_{1}^{+}+(1-t)\tau_{2}^{+}
\]
 with $\tau_{1}^{+},\tau_{2}^{+}\in\P_{1}(\F\rtimes\Aut(\F))$, and
$\tau_{1}^{+}$ not a multiple of $\tilde{\tau}_{G,\pi}^{+}$. By
standard facts \cite[Prop. C.5.1]{BdlHV}, this means that $\Pi_{G,\pi}^{+}$
is \uline{reducible} as a unitary representation of $\F\rtimes\Aut(\F)$.
Therefore (see \cite[Proof of Theorem C.5.2]{BdlHV}) there is some
projection $P$ that commutes with all the elements $\Pi_{G,\pi}^{+}(w,\alpha)$,
$P\xi_{G,\pi}^{+}\neq0$, and 
\[
\tau_{3}^{+}(w,\alpha)=\left\langle \Pi_{G,\pi}^{+}(w,\alpha)\frac{P\xi_{G,\pi}^{+}}{\|P\xi_{G,\pi}^{+}\|},\frac{P\xi_{G,\pi}^{+}}{\|P\xi_{G,\pi}^{+}\|}\right\rangle 
\]
is in $\P_{1}(\F\rtimes\Aut(\F))$ with $\tau_{3}^{+}\neq\tilde{\tau}_{G,\pi}^{+}$
(i.e. $\frac{P\xi_{G,\pi}^{+}}{\|P\xi_{G,\pi}^{+}\|}\neq\xi_{G,\pi}^{+}$).
It follows that $P\xi_{G,\pi}^{+}$ is an invariant vector for $\Aut(\F)$
under $\Pi_{0}$. However, when restricted to $\Aut(\F)$, the representation
$\Pi_{0}$ is simply the representation of $\Aut(\F)$ on the $\End(V)$-valued
$L^{2}$ functions on $G^{r}$ acting by permutations of $G^{r}$.
Since $\Aut(\F)$ acts ergodically on $G^{r}$, we must have
\[
\frac{P\xi_{G,\pi}^{+}}{\|P\xi_{G,\pi}^{+}\|}=\int_{G^{r}}^{\oplus}Bd\mu^{r}(\mathbf{g}),
\]
where $B\in\End(V)$ is a constant with $\trace(BB^{*})=1$. But this
means in turn, using the invariance of Haar measure under conjugation,
\begin{align*}
\tau_{3}^{+}(w,\alpha) & =\tau_{3}^{+}(w,e)=\int_{G^{r}}\trace(\pi(w(\mathbf{g}))BB^{*})d\mu^{r}(\mathbf{g})\\
 & =\int_{(g_{1},\ldots,g_{r})\in G^{r}}\left(\int_{h\in G}\trace\left(\pi(w(hg_{1}h^{-1},\ldots,hg_{r}h^{-1}))BB^{*}\right)d\mu(h)\right)d\mu^{r}(g_{1},\ldots,g_{r})\\
 & =\int_{(g_{1},\ldots,g_{r})\in G^{r}}\left(\int_{h\in G}\trace\left(\pi(h)\pi(w(g_{1},\ldots,g_{r}))\pi(h)^{-1}BB^{*}\right)d\mu(h)\right)d\mu^{r}(g_{1},\ldots,g_{r})\\
 & =\frac{1}{\dim V}\int_{(g_{1},\ldots,g_{r})\in G^{r}}\trace(\pi(w({\bf g})))\trace(BB^{*})d\mu^{r}(\mathbf{g})\\
 & =\frac{1}{\dim V}\int_{(g_{1},\ldots,g_{r})\in G^{r}}\trace(\pi(w({\bf g})))d\mu^{r}(\mathbf{g})=\tilde{\tau}_{G,\pi}^{+}(w,\alpha).
\end{align*}
The second last equality used Lemma \ref{lem:simple-lemma}. This
is a contradiction.
\end{proof}
\begin{proof}[\textbf{Proof of Theorem \ref{thm:hierarchy-collapse}}]
If $\underline{x}\in\GL_{r}(\Z).\underline{y}$ then it is easy to
check that $\mu_{\underline{x}}=\mu_{\underline{y}}$ on any finite
abelian group.

The other direction is the more interesting one. Assume that $\underline{x}\notin\GL_{r}(\Z).\underline{y}$.
The orbit of $\underline{x}=(x_{1},\ldots,x_{r})$ is parametrized
by the modulus of the greatest common divisor of the $x_{i}$ (which
we take to be $\infty$ if $\underline{x}=\underline{0})$, and similarly
for $\underline{y}$.

Thus our assumptions entail, by switching $\underline{x}$ and $\underline{y}$
if necessary, that there is a prime $p$ and an exponent $f$ such
that $\underline{x}\equiv0\bmod p^{f}$ and $\underline{y}\not\equiv0\bmod p^{f}$.
This means that for any $\mathbf{g}=(g_{1},\ldots,g_{r})\in(\Z/p^{f}\Z)^{r}$,
$x_{1}g_{1}+\cdots+x_{r}g_{r}=\underline{0}$, so the $\underline{x}$-measure
on $\Z/p^{f}\Z$ is an atom at $\underline{0}$. On the other hand,
$\underline{y}\not\equiv0\bmod p^{f}$ implies there is some $\mathbf{g}=(g_{1},\ldots,g_{r})\in(\Z/p^{f}\Z)^{r}$
such that $y_{1}g_{1}+\cdots+y_{r}g_{r}\neq\underline{0},$ so the
$\underline{y}$-measure on $\Z/p^{f}\Z$ is not supported at $\underline{0}\in\Z/p^{f}\Z$.
This proves the $\underline{x}$- and $\underline{y}$-measures on
$\Z/p^{f}\Z$ are distinct. 

\end{proof}

\bibliographystyle{alpha}

\newpage{}

\begin{multicols}{2}

\noindent Benoît Collins, \\
Department of Mathematics, 

\noindent Kyoto University, 

\noindent Kyoto 606-8502, 

\noindent Japan 

\noindent \texttt{collins@math.kyoto-u.ac.jp}\\
\\
Michael Magee, \\
Department of Mathematical Sciences,\\
Durham University, \\
Lower Mountjoy, DH1 3LE Durham,\\
United Kingdom

\noindent \texttt{michael.r.magee@durham.ac.uk}\\

\noindent \columnbreak

\noindent Doron Puder, \\
School of Mathematical Sciences, \\
Tel Aviv University, \\
Tel Aviv, 6997801, Israel\\
\texttt{doronpuder@gmail.com}

\end{multicols}
\end{document}